\newtheorem{theorem}{Theorem}[section]
\newtheorem{lemma}[theorem]{Lemma}
\newtheorem{remark}[theorem]{Remark}
\newtheorem{corollary}[theorem]{Corollary}
\newtheorem{proposition}[theorem]{Proposition}
\def\lcm{\mathrm{lcm}}
\def\gcd{\mathrm{gcd}}
\def\diag{\mathrm{diag}}
\def\GL{\mathrm{GL}}
\def\SL{\mathrm{SL}}
\def\PSU{\mathrm{PSU}}
\def\Sp{\mathrm{Sp}}
\def\SO{\mathrm{SO}}
\def\SU{\mathrm{SU}}
\def\C{\mathbf{C}}
\def\Z{\mathbf{Z}}
\def\F{\mathbf{F}}
\def\Q{\mathbf{Q}}
\def\C{\mathbf{C}}
\def\R{\mathbf{R}}
\def\un{\mathbf{1}}
\def\Ker{\mathrm{Ker}}
\def\Aut{\mathrm{Aut}}
\def\Out{\mathrm{Out}}
\def\Ind{\mathrm{Ind}}
\def\into{\hookrightarrow}
\def\onto{\twoheadrightarrow}
\def\Id{\mathrm{Id}\,}
\def\ii{\mathrm{i}}
\def\la{\lambda}
\def\dd{\mathrm{d}}
\date{November 30, 2015.}
\tikzset{every picture/.style = {inner sep=0pt,baseline}}
\tikzset{p/.style = {draw, shape          = circle,
                                 text           = black,
                                 font=\tiny,
                                 inner sep      = 0pt,
                                 outer sep      = 0pt,
                                 minimum size   = 1pt}}
\tikzset{r/.style = {red, very thin}}
\tikzset{g/.style = {green, very thin}}
\tikzset{b/.style = {blue, very thin}}
\tikzset{o/.style = {orange, very thin}}
\tikzset{u/.style = {black, very thin}}
\newcommand*\circled[1]{\tikz[baseline=(char.base)]{
            \node[shape=circle,draw,inner sep=2pt] (char) {#1};}}
\begin{document}
\centerline{}

\title[Crystallographic groups from reflection groups]{Crystallographic groups and flat manifolds from complex reflection groups}
\author[I.~Marin]{Ivan Marin}
\address{LAMFA, Universit\'e de Picardie-Jules Verne, Amiens, France}
\email{ivan.marin@u-picardie.fr}
\subjclass[2010]{20F36, 20F55, 20H15}
\medskip

\begin{abstract} 
Following an idea of Gon\c calves, Guaschi and Ocampo on the usual braid group
we construct crystallographic and Bieberbach groups as (sub)quotients of the
generalized braid group associated to an arbitrary complex reflection group.
\end{abstract}

\maketitle

\tableofcontents

\section{Introduction}

In the paper \cite{GGO}, Gon\c calves, Guaschi and Ocampo
notice that the quotient of the (usual)  braid group $B_n$ by the commutator subgroup $(P_n,P_n)$
of the pure braid is a crystallographic group. They then prove the remarkable fact
that this quotient has no 2-torsion. From this, they can build Bieberbach groups,
namely torsion-free crystallographic groups (characterizing compact flat manifolds), by taking the
preimage under the natural projection of a 2-subgroup of the symmetric group $\mathfrak{S}_n$. In the same
paper, they prove a number of results on the finite-order elements and fnite subgroups
of this quotient $B_n/(P_n,P_n)$.

We prove here that \emph{all} the results of \cite{GGO} can be generalized to the following more
general setting, and we provide at the same time possibly simpler proofs. Let $W$ be a (finite) complex reflection group, and $B$ the corresponding
generalized braid group in the sense of \cite{BMR}. If $W$ is a real reflection group (a.k.a. finite
Coxeter group), then $B$
is an Artin group of finite Coxeter type. The quotient $B/(P,P)$ of $B$ by the commutator
subgroup of the pure braid group $P$ has been studied in the `real' case by
J. Tits (under the name `$V$') in his seminal 1966 paper \cite{TITS}  and by F. Digne in the unpublished paper \cite{DIGNE}. In the general `complex'
case, it has been studied by the author in \cite{ARREFL} and by V. Beck in \cite{BECK}.

In the present paper, we prove that this quotient is always a crystallographic group, and that it never contains
elements of order $2$. This provides a way to construct Bieberbach groups,
by taking the preimage of the 2-Sylow subgroup of the quotient $W/Z(W)$ of $W$
by its center. This is done in section \ref{sect:mainconstruction} of the paper, in which we also prove that
parabolic inclusions between reflection groups induce inclusions between the corresponding crystallographic groups.

In section \ref{sect:posstorsion}, we describe a general way to construct elements of finite
order inside $B/(P,P)$, generalizing the elements constructed in \cite{GGO}.
It is based on Springer's theory of regular elements and regular numbers. We connect
the possible orders of these elements, that we call the \emph{freely regular numbers},
with a previously introduced integer $\kappa(W)$,
which is naturally associated with the extension $1 \to P^{ab} \to B/(P,P) \to W \to 1$
(where $P^{ab} = P/(P,P)$ denotes the abelianization of $P$).
Conversely, we prove a criterion ensuring that $B/(P,P)$ cannot
contain elements of certain orders, in addition to the powers of $2$. This enables us to
prove that a few groups of the form $B/(P,P)$ actually \emph{are} Bieberbach groups.
For this study, we need to provide a detailed description of the regular elements in complex
reflection groups, for which we could not find an adequate reference.
Finally, we explain and generalize the construction given in \cite{GGO} of a non-abelian finite subgroup of order $21$ inside $B_7/(P_7,P_7)$.

In the final section \ref{sect:kahler} we explore, focusing on low-dimensional examples,
to which extent the constructed manifolds can be endowed with a Kähler structure.

As a concluding remark, we notice that \emph{none} of our proofs need to use the Shephard-Todd classification of irreducible complex reflection groups, \emph{except} the result
that freely regular numbers are necessarily coprime to $\kappa(W)$. It would be nice
to have a proof which does not use the classification for this result, too.

\medskip

{\bf Acknowledgements.} I would like to thank the anonymous
referee for a thorough reading of the article and insightful
suggestions, and V. Beck for useful remarks.

\bigskip

\section{Main construction}

\label{sect:mainconstruction}
\subsection{Reminder on Bieberbach groups}

Our reference on the subject will be \cite{DEKIMPE}. Recall from there that every compact flat manifold
can be obtained as a quotient $\R^N/\Gamma$, where $\Gamma$ is any torsion-free cocompact (a.k.a. uniform)
discrete subgroup of the group of affine isometries $\R^N \rtimes O_N(\R)$ of the Euclidean space $\R^N$.
Moreover, such groups, considered up to isomorphisms, completely characterize the manifold. This is
the content of the famous Bieberbach theorems, and these groups are called Bieberbach groups. Removing
the `torsion-free' assumption defines the larger class of so-called crystallographic groups.

A classical result (see \cite{DEKIMPE}, theorem 2.1.4) states that an abstract group $\Gamma$
is crystallographic if and only if there exists a short exact sequence
$$
1 \to \Z^N \to \Gamma \to \Phi \to 1
$$
where $\Phi$ is finite group and $\Z^N$ is maximal abelian inside $\Gamma$. This is clearly equivalent to
saying that $\Phi$ is finite and the natural action $\Phi \to \Out(\Z^N)=\Aut(\Z^N) = \GL_N(\Z)$
is faithful. If $\Gamma$ is torsion-free, then $\Phi$ is the holonomy group of the corresponding manifold. In the general case, $\Phi$ is still canonically determined by $\Gamma$
and we call it the holonomy group of $\Gamma$. Finally, if the group $\Gamma$ has no $p$-torsion, then the preimage of any $p$-subgroup $S$ of $\Phi$
is torsion-free, and therefore is a Bieberbach group providing a flat manifold with holonomy $S$.

Additional geometric properties of the manifold can be checked from the
associated representation $\rho : \Phi \to \GL_N(\C)$ of the group $\Phi$.
For instance it is K\"ahler if and only if $N$ is an even integer and each of the irreducible
constituents of $\rho$ which have orthogonal (or real) representation
type appear an even number of times (see \cite{JOHNSONREES}).

\subsection{Complex braid groups}

We refer the reader to \cite{BMR} for the construction of complex braid groups.
Let $W < \GL_n(\C)$ be a complex reflection group. We let $\mathcal{A}$ denote the
set of hyperplanes fixed by the reflections of $W$ (so-called reflecting
hyperplanes), and denote $X = \C^n \setminus \bigcup \mathcal{A}$ their
complement. We denote $\mathcal{R}$ the set of (pseudo-)reflections in $W$. We set $\mathcal{R}^*$ its subset of distinguished reflections, that is (pseudo-)reflections
$s$ with eigenvalues $\{ 1, e^{2 \pi \ii/m} \}$ where $m$ is the order of the cyclic
subgroup of $W$ fixing $\Ker(s-1) \in \mathcal{A}$. There is a natural correspondance
$\mathcal{R}^* \leftrightarrow \mathcal{A}$ given by $s \mapsto \Ker(s-1)$. It is $W$-equivariant w.r.t. the conjugation action on $\mathcal{R}^*$ and the permutation action
on $\mathcal{A}$. 

By definition, neglecting base points, we have $P = \pi_1(X)$ and $B = \pi_1(X/W)$.
The short exact sequence $1 \to P^{ab} \to B/(P,P) \to W \to 1$
is exact, and $P^{ab} = \pi_1(X)^{ab} = H_1(X,\Z)$ is a free $\Z$-module of finite rank, $H_1(X,\Z) \simeq \Z^N$
where $N = |\mathcal{A}|$ is the number of reflecting hyperplanes.
It admits a basis $(c_H)_{H \in \mathcal{A}}$ uniquely defined
by the condition $\int_{c_{H_1}} \omega_{H_2} = \delta_{H_1,H_2}$,
where $\omega_H = \frac{1}{2  \pi\ii} \dd \varphi_H/\varphi_H$ is the logarithmic
1-form associated to the hyperplane $H = \Ker \varphi_H$,
and $\delta_{H_1,H_2}$ is the Kronecker symbol.
Moreover, the conjugation action of $B/(P,P)$ on $P^{ab}$
factorizes through $W$ and coincides with the permutation action
of $W$ on $\mathcal{A}$ under the correspondance $H \leftrightarrow c_H$. The kernel of this permutation action
is exactly the set of all elements of $W$ which commute
with all the distinguished pseudo-reflections. Since these
reflections generate $W$, this kernel is thus exactly the centre of $W$.

Let $\overline{W} = W/Z(W)$, and $P_0$ the subgroup of $B/(P,P)$ generated
by $P^{ab}$ and by the (image of the) element $\mathbf{z} \in B$ defined as the homotopy class
of $t \mapsto e^{2  \pi\ii t/d} x_0$, where $x_0$ is the chosen base point in $X$ and $d = |Z(W)|$.
This element is central and its image 
in $W$ generates $Z(W)$ when
$W$ is irreducible
(see \cite {BMR}). Moreover $\mathbf{z}^d = \boldsymbol{\pi} : t \mapsto e^{2 \pi \ii t} x_0$ is a central element in $P$,
that can be written $\sum_{H \in \mathcal{A}} c_H$ inside $P^{ab}$.
If $W = W_1 \times \dots \times W_r$ is a decomposition of $W$ in irreducible components,
and $B = B_1 \times \dots \times B_r$ is the corresponding decomposition of $B$,
let $\mathbf{z}_1,\dots,\mathbf{z}_r$ denote the associated central elements. We let $Z_0(B)$
denote the central subgroup of $B$ generated by $\mathbf{z}_1,\dots,\mathbf{z}_r$.
Although we will not use it here it can be shown that
$Z_0(B) = Z(B)$ and that
the projection $B \to W$
induces a short exact
sequence $1 \to Z(P) \to Z(B) \to Z(W)\to 1$
(see \cite{DMM}). However, we will keep the notation $Z_0(B)$ in order to emphasize that our proofs do
not make use of the classification of complex reflection groups, as opposed to the proof that $Z(B) = Z_0(B)$.

\begin{theorem}
For every 
complex reflection group $W$, the group $B/(P,P)$ is crystallographic with holonomy
group $W/Z(W)$ of dimension $N = |\mathcal{A}|$.
The kernel of the projection map $B/(P,P) \to W/Z(W)$ is the subgroup $P_0$ generated by $P^{ab}$ 
and $Z_0(B)$.
We have $P_0 \simeq \Z^N$.
\end{theorem}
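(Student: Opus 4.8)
The plan is to apply the abstract characterization of crystallographic groups recalled above: it is enough to exhibit inside $\Gamma := B/(P,P)$ a normal subgroup isomorphic to $\Z^N$, with finite quotient acting faithfully on it, and the natural candidate is $P_0$. The first step is to reduce to the case where $W$ is irreducible. If $W = W_1 \times \dots \times W_r$ then $X = \prod_i X_i$, hence $B = \prod_i B_i$, $P = \prod_i P_i$ and $(P,P) = \prod_i (P_i,P_i)$, so $\Gamma = \prod_i \Gamma_i$ with $\Gamma_i = B_i/(P_i,P_i)$, and likewise $P^{ab} = \prod_i P_i^{ab}$, $Z(W) = \prod_i Z(W_i)$ and $P_0 = \prod_i P_{0,i}$ with $P_{0,i} = \langle P_i^{ab}, \mathbf{z}_i \rangle$. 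Thus all assertions for $W$ follow by taking direct products from the corresponding assertions for the irreducible factors $W_i$ (a product of crystallographic data of dimensions $N_i$ is crystallographic of dimension $N = \sum_i N_i$, with holonomy $\prod_i W_i/Z(W_i) = W/Z(W)$ still acting faithfully).

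So assume $W$ irreducible, write $\mathbf{z} = \mathbf{z}_1$ and $d = |Z(W)|$. Since $\mathbf{z}$ is central in $B$, its image is central in $\Gamma$, so $P_0 = \langle P^{ab}, \mathbf{z} \rangle$ is abelian. The projection $\Gamma \to W$ has kernel $P^{ab}$ and sends $\mathbf{z}$ to a generator of $Z(W)$, so it restricts to a short exact sequence $1 \to P^{ab} \to P_0 \to Z(W) \to 1$ in which $Z(W)$ is cyclic of order $d$ generated by the image of $\mathbf{z}$; in particular $\mathbf{z}^d = \boldsymbol{\pi} = \sum_{H \in \mathcal{A}} c_H$ already lies in $P^{ab}$. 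For any abelian group $G$ which is an extension of $\Z/d$ by a free abelian group $A$, choosing a lift $g$ of a generator and setting $v := g^d \in A$ gives $G \simeq (A \oplus \Z g) / \langle v - dg \rangle$. Applying this with $A = P^{ab} \simeq \Z^N$ (basis $(c_H)_{H \in \mathcal{A}}$), $g = \mathbf{z}$ and $v = \sum_H c_H$ yields
$$ P_0 \;\simeq\; \Z^{N+1} / \Z\cdot(1,1,\dots,1,-d). $$
Since $(1,\dots,1,-d)$ is a primitive vector of $\Z^{N+1}$, this quotient is free abelian of rank $N$; hence $P_0 \simeq \Z^N$, and in particular $P_0$ is torsion-free.

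It remains to assemble the pieces. The composite $\Gamma \to W \to W/Z(W)$ has kernel equal to the preimage of $Z(W)$; as this preimage contains $P^{ab} = \Ker(\Gamma \to W)$ and $Z_0(B)$ maps onto $Z(W)$, it is exactly $P_0$. Thus $P_0$ is normal in $\Gamma$ with finite quotient $\Gamma/P_0 \simeq W/Z(W)$, and the conjugation action of $\Gamma$ on the abelian group $P_0 \simeq \Z^N$ factors through $\Gamma/P_0 \simeq W/Z(W)$. Restricted to the invariant subgroup $P^{ab}$, this is the permutation action of $W$ on $(c_H)_{H \in \mathcal{A}}$, which was recalled to have kernel exactly $Z(W)$, hence to be faithful as an action of $W/Z(W)$; therefore the action of $W/Z(W)$ on $P_0$ is faithful as well, i.e. $W/Z(W) \to \GL(P_0) = \GL_N(\Z)$ is injective. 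The abstract criterion then shows that $\Gamma$ is crystallographic of dimension $N$ with holonomy group $W/Z(W)$, and forces $P_0$ to be maximal abelian. The one genuinely delicate point is the identification $P_0 \simeq \Z^N$ in the irreducible case: what makes the potential torsion coming from the extension by $Z(W)$ vanish is precisely that $\mathbf{z}^d$ equals the \emph{primitive} vector $\sum_H c_H$ of $P^{ab}$, and not a proper multiple of it.
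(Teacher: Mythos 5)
Your proof is correct and follows essentially the same route as the paper: reduction to the irreducible case via direct products, identification of the kernel of $B/(P,P)\to W/Z(W)$ with $P_0$, faithfulness of the $W/Z(W)$-action read off from the permutation action on the basis $(c_H)$, and the presentation $P_0\simeq \Z^{N+1}/\Z\,(1,\dots,1,-d)$ coming from $\mathbf{z}^d=\boldsymbol{\pi}=\sum_H c_H$. The only (harmless) difference is that you phrase the last step via primitivity of the relation vector, whereas the paper states the isomorphism $\Z^{N+1}/\Z v\simeq \Z^N$ directly.
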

\begin{proof}

Decomposing $W$ into irreducibles $W_1\times \dots \times W_r$,
we get, with obvious notations, that 
$P \simeq \prod_{1 \leq k \leq r} P_k$,
$B/(P,P) \simeq \prod_{1 \leq k \leq r} B_k/(P_k,P_k)$,
$Z_0(B) \simeq \prod_{1 \leq k \leq r} Z_0(B_k)$ and from
this one easily checks that we can
assume w.l.o.g. that $W$ is irreducible.

Clearly the kernel of the composite map $B/(P,P) \to W \to \overline{W}$
contains $P^{ab}$ and $\mathbf{z}$, and therefore $P_0$. Conversely,
if $b \in B/(P,P)$ has trivial image inside $\overline{W}$, then its image $\overline{b} \in W$ belongs to $Z(W)$.
There exists $c \in \langle \mathbf{z} \rangle \subset P_0$ whose image $\overline{c}$ inside $W$ is equal to
$\overline{b} $
hence $c^{-1} b \in P^{ab}$ and $b \in P_0$. Therefore we have
a short exact sequence $1 \to P_0 \to B/(P,P) \to \overline{W} \to 1$. Since $P_0$ is abelian
we have an action $\overline{W} \to \Aut(P_0)$. Since $w \in W$ acts trivially on $P^{ab} \subset P_0$
if and only if $w \in Z(W)$, this action is faithful. It remains to prove $P_0 \simeq \Z^N$.
 
We identify $P^{ab}$ with $\Z^N$ by using the basis $(c_H)_{\mathcal{A}}$ and an arbitrary
total ordering on $\mathcal{A}$,
and we let $d = |Z(W)|$. Then $\mathbf{z}^d = \boldsymbol{\pi} \in P^{ab}$ corresponds to
the vector $(1,\dots,1) \in \Z^N \simeq P^{ab}$.
By definition $P_0$ is a quotient of $P^{ab} \times \langle \mathbf{z} \rangle \simeq \Z^N\times \Z \simeq \Z^{N+1}$
 and the kernel of $\Z^{N+1} \simeq P^{ab} \times \langle \mathbf{z} \rangle \onto P_0$
can be identified with the line spanned
by the vector $ v = (1,1,\dots,1,-d)$, because $\mathbf{z}^m \in P^{ab}$ iff $d$ divides $m$.
Therefore, $P_0 \simeq \Z^{N+1}/\Z v \simeq \Z^N$ and this concludes the proof.
 
\end{proof}

In particular we get the following characterization of when $B/(P,P)$
is crystallographic with holonomy group $W$. This of course includes the case
of the ordinary braid group.

\begin{corollary} The group $B/(P,P)$ is crystallographic with holonomy group $W$ 
if and only if
$Z(W) = 1$.
\end{corollary}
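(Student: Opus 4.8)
The plan is to deduce this immediately from the preceding theorem, which already does all the work. The theorem states that for any complex reflection group $W$, the quotient $B/(P,P)$ is crystallographic with holonomy group $\overline{W} = W/Z(W)$. So the task reduces to comparing $\overline{W}$ with $W$ and matching up the definition of "crystallographic with holonomy group $W$", where by this phrase we mean that there is a short exact sequence $1 \to \Z^N \to \Gamma \to W \to 1$ with $\Z^N$ maximal abelian, i.e.\ the action $W \to \GL_N(\Z)$ is faithful (using the characterization recalled in the subsection on Bieberbach groups, \cite{DEKIMPE}, Theorem 2.1.4).

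First I would treat the "if" direction: suppose $Z(W) = 1$. Then $\overline{W} = W/Z(W) = W$, so the theorem directly gives that $B/(P,P)$ is crystallographic with holonomy group $W$, and we are done. For the "only if" direction, I would argue by the canonicity of the holonomy group: as recalled after the statement of \cite{DEKIMPE}, Theorem 2.1.4, the finite quotient $\Phi$ in a crystallographic extension $1 \to \Z^N \to \Gamma \to \Phi \to 1$ with $\Z^N$ maximal abelian is canonically determined by $\Gamma$ (it is $\Gamma/A$ where $A$ is the unique maximal abelian normal subgroup of finite index, equivalently $A$ is the centralizer of the unique such finite-index free abelian subgroup). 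Hence if $B/(P,P)$ is crystallographic with holonomy group $W$, then $W$ must be isomorphic to the holonomy group supplied by the theorem, namely $\overline{W} = W/Z(W)$. Comparing orders, $|W| = |W/Z(W)|$ forces $|Z(W)| = 1$, i.e.\ $Z(W) = 1$.

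The only subtle point is making sure the notion of "holonomy group" is genuinely an invariant of the abstract group $\Gamma = B/(P,P)$, so that the two descriptions of it (as $W$ by hypothesis, as $\overline{W}$ by the theorem) can legitimately be equated; this is exactly the canonicity statement recalled in the excerpt, so no new argument is needed. One should also note that the maximal abelian subgroup appearing in the theorem's extension is $P_0 \simeq \Z^N$, and in the hypothetical extension with quotient $W$ it would likewise be the unique maximal finite-index abelian subgroup, hence they coincide, and the two quotients $\overline W$ and $W$ are identified as $\Gamma/P_0$. I do not expect a real obstacle here; the entire content is bookkeeping on top of the theorem, and the corollary is essentially a restatement.
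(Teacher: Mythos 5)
Your proposal is correct and matches the paper's (implicit) argument: the paper states this as an immediate consequence of the preceding theorem, relying exactly on the canonicity of the holonomy group recalled in the reminder on Bieberbach groups, so that $W\cong W/Z(W)$ forces $Z(W)=1$ by finiteness. No further comment is needed.
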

Of course we only need to analyse this condition $Z(W) = 1$ for \emph{irreducible}
groups. We refer the reader to \cite{LEHRTAY} for the Shephard-Todd classification of
such groups (see also table \ref{tab:exceptions}).
Among irreducible \emph{Coxeter} groups and since $Z(W) \subset \R \cap \mu_{\infty}(\C) = \{-1,1 \}$,
this condition is equivalent to the condition $-1 \not\in W$. It is well-known that
this happens exactly in Coxeter types $A_n,n\geq 2$, $D_{2n+1}, n \geq 1$, $I_2(2m+1), m \geq 1$ and $E_6$.
Among non-real irreducible complex reflection groups, we have $Z(W) \neq 1$
for all exceptional ones. The irreducible groups inside the family $G(de,e,n)$ have center
of order $d(e \wedge n)$. Therefore, the only possibility for $Z(W) = 1$ is $d=1$ and $e \wedge n = 1$.
Thus, the non-real irreducible complex reflection groups with $Z(W) = 1$ are
the $G(e,e,n)$ with $e \wedge n =1$. Note that this includes the real groups $A_n, n \geq 2$,
$D_{2n+1}, n \geq 1$ and $I_2(2m+1), m \geq 1$. 
\begin{corollary} If the irreducible components of $W$ are of type $G(e,e,n)$ with $e \wedge n =1$,
or of type $E_6$, then $B/(P,P)$ is crystallographic with holonomy group $W$.
\end{corollary}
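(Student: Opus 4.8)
The plan is to deduce this immediately from the preceding corollary, according to which $B/(P,P)$ is crystallographic with holonomy group $W$ exactly when $Z(W) = 1$; so the whole task reduces to checking that the stated hypothesis on $W$ forces $Z(W) = 1$.

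First I would pass to the irreducible factors. Writing $W = W_1 \times \dots \times W_r$ for the decomposition into irreducible components, one has $Z(W) = Z(W_1) \times \dots \times Z(W_r)$, so it suffices to see that each $Z(W_k)$ is trivial. If $W_k$ has type $E_6$, then $Z(W_k) = 1$ is the classical fact that the Weyl group of type $E_6$ has trivial centre (equivalently $-1 \notin W(E_6)$, as already recalled above). If $W_k$ has type $G(e,e,n)$ with $e \wedge n = 1$, then I would invoke the centre computation recalled in the discussion above: the irreducible group $G(de,e,n)$ has centre of order $d(e\wedge n)$; specialising to $d = 1$ and using $e \wedge n = 1$ gives $|Z(W_k)| = 1$. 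Hence $Z(W) = 1$, and the preceding corollary applies, yielding that $B/(P,P)$ is crystallographic with holonomy group $W$.

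There is essentially no obstacle here: the content of the statement is carried entirely by the preceding corollary and by the value of $|Z(G(de,e,n))|$, both of which are available. The only point requiring a little care is to read the parameters in the Shephard--Todd conventions, so that, for instance, $A_1 = G(1,1,2)$ — which does have non-trivial centre — is not included; under those conventions the discussion above has in fact already identified the groups $G(e,e,n)$ with $e\wedge n = 1$ together with $E_6$ as precisely the irreducible reflection groups with trivial centre, so the present corollary is genuinely a specialisation of the previous one rather than a strengthening of it.
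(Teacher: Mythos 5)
Your argument is exactly the paper's (implicit) one: the corollary is stated without proof as an immediate consequence of the preceding corollary ($B/(P,P)$ crystallographic with holonomy $W$ iff $Z(W)=1$) together with the centre computations in the surrounding discussion ($|Z(G(de,e,n))|=d(e\wedge n)$ and $-1\notin W(E_6)$), reduced to irreducible factors via $Z(W_1\times\dots\times W_r)=Z(W_1)\times\dots\times Z(W_r)$. Your remark about excluding $A_1=G(1,1,2)$ is a sensible precaution consistent with the paper's own restriction to $A_n$, $n\geq 2$.
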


\medskip

We now prove that this construction is compatible with parabolic inclusions.

\begin{proposition} \label{prop:injparab} Let $W'$ be a parabolic subgroup of $W$, and $B'$ (resp. $P'$) the corresponding (pure)
braid group. There is an embedding $B'/(P',P') \into B/(P,P)$, canonical up to
$P^{ab}$-conjugacy.
\end{proposition}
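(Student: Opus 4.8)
The starting point is the classical fact that a parabolic subgroup $W' \leq W$ is the stabilizer (equivalently, the pointwise fixer) of a point in the ambient space $\C^n$, and that by the theorem of Steinberg the set of reflecting hyperplanes of $W'$ is exactly $\mathcal{A}' = \{ H \in \mathcal{A} : H \ni y \}$ for a suitable $y$. Choosing a base point $x_0 \in X$ close to $y$ (more precisely, in a small ball around $y$ minus the hyperplanes), one gets from \cite{BMR} the standard parabolic inclusion $B' \hookrightarrow B$, well-defined up to $P$-conjugacy, which restricts to $P' \hookrightarrow P$; here $P' = \pi_1(X')$ where $X'$ is the complement of $\mathcal{A}'$. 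So the first step is simply to fix such an inclusion and record these facts.

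Next I would pass to abelianizations. The inclusion $P' \hookrightarrow P$ induces a map $P'^{ab} \to P^{ab}$, and the point is that under the distinguished bases $(c'_{H})_{H \in \mathcal{A}'}$ of $P'^{ab}$ and $(c_H)_{H \in \mathcal{A}}$ of $P^{ab}$ this map sends $c'_H \mapsto c_H$ for $H \in \mathcal{A}'$. This follows directly from the integral characterization of the basis elements: the logarithmic form $\omega_H$ on $X'$ is the restriction of $\omega_H$ on $X$, the loop $c'_H$ maps to a loop on $X$ homologous to $c_H$, and the defining integrals $\int \omega_{H_2}$ over these loops agree. Consequently $P'^{ab} \to P^{ab}$ is the inclusion of the coordinate sublattice $\Z^{N'} \hookrightarrow \Z^N$ indexed by $\mathcal{A}' \subset \mathcal{A}$; in particular it is \emph{injective}. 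This injectivity is the crux, since a priori a homomorphism between abelianizations need not be injective even when the original map is.

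With injectivity of $P'^{ab} \hookrightarrow P^{ab}$ in hand, I would build the map $B'/(P',P') \to B/(P,P)$ as follows. The composite $B' \hookrightarrow B \onto B/(P,P)$ kills $(P',P')$, because $(P',P') \subset (P,P)$; hence it factors through a homomorphism $\phi: B'/(P',P') \to B/(P,P)$. It remains to check $\phi$ is injective. Suppose $b \in B'/(P',P')$ lies in $\ker\phi$. Its image $\bar b$ in $W'$ maps, via $W' \hookrightarrow W$ (which is injective), to the image of $b$ in $W$, which is trivial; so $\bar b = 1$, i.e. $b \in P'^{ab}$. But then the image of $b$ in $B/(P,P)$, being trivial and lying in $P^{ab}$, forces $b$ (viewed in $P'^{ab}$) to lie in the kernel of $P'^{ab} \to P^{ab}$, which is trivial by the previous step. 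Hence $b = 1$ and $\phi$ is injective. Finally, the ambiguity of the parabolic inclusion $B' \hookrightarrow B$ up to $P$-conjugacy descends to ambiguity up to $(P/(P,P))$-conjugacy, i.e. $P^{ab}$-conjugacy, in $B/(P,P)$, which is exactly the assertion.

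The only real obstacle is the second step — verifying that the induced map on abelianizations is the coordinate inclusion, hence injective. Everything else is a formal diagram chase. One has to be slightly careful that the chosen base point makes all three fundamental group identifications ($P'$, $P$, and their quotients) simultaneously compatible, and that "the loop $c'_H$ maps to something homologous to $c_H$" is justified by the fact that a small meridian loop around $H$ computed in $X'$ is also a small meridian loop around $H$ in $X$; the logarithmic-form pairing then pins down both basis elements identically. I do not expect any genuine difficulty beyond setting up these base-point conventions cleanly.
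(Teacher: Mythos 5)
Your proposal is correct and follows essentially the same route as the paper: both arguments reduce the proposition to the injectivity of $P'^{ab}=H_1(X',\Z)\to H_1(X,\Z)=P^{ab}$, which the paper cites from the homological theory of hyperplane arrangements and which you justify directly via the pairing of meridian loops with the logarithmic forms $\omega_H$ (noting that $\omega_H$ is exact on the small ball when $H\notin\mathcal{A}'$). Your diagram chase deducing injectivity of $B'/(P',P')\to B/(P,P)$ from this is equivalent to the paper's verification that $(P,P)\cap\iota(P')=(P',P')$.
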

\begin{proof}
We consider an embedding $\iota : B' \into B$ as defined in \cite{BMR}. Such embeddings
are canonical up to $P$-conjugacy. By composition with the canonical projection $B \to B/(P,P)$
we get a morphism $\bar{\iota} : B' \to B/(P,P)$. We want to show $\Ker\, \bar{\iota} = (P',P')$.
Since $\iota(P') \subset P$ we get $\Ker\, \bar{\iota} \supset (P',P')$.

Let $x \in \Ker \, \bar{\iota}$. We have $\iota(x) \in (P,P)$ hence
$\iota(x) \in (P,P) \cap \iota(B')$ and we need to show $(P,P) \cap \iota(B') = (P',P')$.
By commutation of the diagram
$$
\xymatrix{
1 \ar[r] & P' \ar[d] \ar[r] & B' \ar[d]_{\iota} \ar[r] & W' \ar[d]\ar[r] & 1 \\
1 \ar[r] & P \ar[r] & B \ar[r] & W \ar[r] & 1 \\
}
$$
we have $\iota(B') \cap P = \iota(P')$, and we thus need to show
$(P,P) \cap \iota(P') = (P',P')$. For this we need to recall that $j = \iota_{|P'}$
is constructed from
\begin{enumerate}
\item the choice of an open ball $\Omega$ in $\C^n$
\item a base point $x_2 \in \Omega \cap \tilde{X}'$
\end{enumerate}
where $\tilde{X}' \simeq X' \times \C^{m}$
is the complement in $\C^n$ of the reflecting hyperplanes
of $W' \subset W$. From this there is an isomorphism $\pi_1(X \cap \Omega,x_2) \to \pi_1(\tilde{X}',x_2)$.
Letting $\bar{x}_2$ denote the natural projection of $x_2$ on $X'$, we have an isomorphism $\pi_1(\tilde{X'}, x_2) =
\pi_1(X'\times \C^m, x_2) \to \pi_1(X',\bar{x}_2)$. These isomorphisms identify $P'=\pi_1(X',\tilde{x}_2)$ and $\pi_1(X \cap \Omega,x_2)$.
The embedding of $P'$ into $P = \pi_1(X,x_2)$ is then induced by the inclusion $X \cap \Omega \subset X$.
Now, the commutator subgroup $(P,P)$ is the kernel of the Hurewicz morphism $\pi_1(X,x_2) \to H_1(X,\Z)$.
Functoriality of this morphism applied to the inclusion $(X \cap \Omega, x_2) \subset (X,x_2)$ yields a commutative diagram
$$
\xymatrix{
1 \ar[r] & (P',P') \ar[d] \ar[r] & P' \ar[d]_{\iota} \ar[r] & H_1(X',\Z) \ar[d]\ar[r] & 1 \\
1 \ar[r] & (P,P) \ar[r] & P \ar[r] & H_1(X,\Z) \ar[r] & 1. \\
}
$$
Finally, it is known that $H_1(X',\Z) \simeq H_1(X' \times \C^m,\Z) = H_1(\tilde{X}',\Z)\to H_1(X,\Z)$
is an embedding, by the basic homological theory of hyperplane arrangements (see e.g. \cite{ORLIKTERAO}).
This injectivity implies $(P,P) \cap \iota(P') = (P',P')$, and this concludes the proof of the proposition.

\end{proof}

\subsection{Bieberbach subgroups from 2-subgroups}

If $G$ is a subgroup of $W$, we let $\mathfrak{P}_W(G)$ denote its inverse image
under the map $B/(P,P) \to W$. If $\bar{G}$ denote the image of $G$ inside
$\overline{W} = W/Z(W)$, we have a short exact sequence $1 \to P_0 \to \mathfrak{P}_W(G) \to \bar{G} \to 1$, and the group $\mathfrak{P}_W(G)$ is again crystallographic, of dimension $|\mathcal{A}|$
and holonomy group $\bar{G}$.

\begin{theorem} \label{thm:notwo} 
For every complex reflection group $W$,
the group $B/(P,P)$ has no element of order $2$.
\end{theorem}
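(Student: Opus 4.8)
The plan is to study an element $g \in B/(P,P)$ with $g^2 = 1$ through the exact sequence $1 \to P^{ab} \to B/(P,P) \to W \to 1$. Let $w$ be the image of $g$ in $W$, so $w^2 = 1$. If $w = 1$ then $g$ lies in $P^{ab} \simeq \Z^N$, which is torsion-free, so $g = 1$; hence the whole point is to rule out $w \neq 1$.

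Assume $w \neq 1$. The first step is to single out a reflecting hyperplane adapted to $w$. Put $V_+ = \Ker(w-1)$; since $w \neq 1$ this is a proper subspace, and $w$ belongs to the parabolic subgroup $W_{V_+} = \{u \in W : u|_{V_+} = \Id\}$, so $W_{V_+} \neq 1$. By a theorem of Steinberg, $W_{V_+}$ is generated by the reflections it contains, hence it contains a reflection $s$, whose hyperplane $H = \Ker(s-1)$ satisfies $H \supseteq V_+$. Decomposing any $v \in V$ as $v_+ + v_-$ with $w v_\pm = \pm v_\pm$ gives $wv + v = 2v_+ \in V_+ \subseteq H$, so $wH = H$ and $w$ acts on the line $V/H$ by $-1$; equivalently, $\varphi_H \circ w = -\varphi_H$.

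The core of the proof is then a winding-number computation for the $c_H$-coordinate of $[\tilde g^2] \in P^{ab} = H_1(X,\Z)$, where $\tilde g \in B$ is a lift of $g$. Represent $\tilde g$ by a loop in $X/W$ and lift it to a path $\gamma \colon [0,1] \to X$ from $x_0$ to $w x_0$ (note $w^{-1} = w$ since $w^2=1$); then $\tilde g^2 \in P$ is represented by the loop $\delta = \gamma \cdot (w\cdot\gamma)$. From $\varphi_H \circ w = -\varphi_H$ one gets $\dd(\varphi_H\circ w)/(\varphi_H\circ w) = \dd\varphi_H/\varphi_H$, so the two halves of $\delta$ contribute equally and $[\tilde g^2]_H = \int_\delta \omega_H = 2\,n_H(\gamma)$ with $n_H(\gamma) = \frac{1}{2\pi\ii}\int_\gamma \dd\varphi_H/\varphi_H$. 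On the other hand $e^{2\pi\ii\,n_H(\gamma)} = \varphi_H(wx_0)/\varphi_H(x_0) = -1$, so $n_H(\gamma) \in \frac{1}{2} + \Z$ and $[\tilde g^2]_H$ is an \emph{odd} integer. In particular $[\tilde g^2] \neq 0$ in $P^{ab}$, i.e.\ $\tilde g^2 \notin (P,P)$, so $g^2 \neq 1$ — a contradiction. Hence no involution arises as the image of $g$ in $W$, and $B/(P,P)$ has no element of order $2$.

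The step I expect to need the most care is the winding-number computation: one is tempted to conclude that the $H$-contribution is even, since it equals $2\,n_H(\gamma)$, but the whole point is that $\gamma$ joins $x_0$ to $w x_0$ with $\varphi_H(w x_0) = -\varphi_H(x_0)$, which pins $n_H(\gamma)$ to a half-integer and makes the contribution odd. One should also check that the argument is insensitive to the usual orientation and monodromy conventions (here this is automatic because $w^{-1} = w$), and note that, in contrast with the crystallographic statement, no reduction to the irreducible case is required — the proof is uniform in $W$.
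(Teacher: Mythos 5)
Your proof is correct, and it follows the same overall strategy as the paper's: use Steinberg's theorem to produce a reflecting hyperplane $H \supseteq \Ker(w-1)$, observe that $w$ stabilizes $H$ and acts by $-1$ on $V/H$, and then show that the $c_H$-coordinate of the square of any lift is an odd integer, contradicting $g^2=1$. Where you genuinely differ is in how that oddness is established. The paper constructs an explicit geometric lift $\alpha$ of $w$ (the path $t \mapsto x_E + e^{\pi \ii t}x_E^{\perp}$), which forces it to first adjust the basepoint so that this path stays inside $X$, computes $\int_{\alpha^2}\omega_H = 1$ for $H \supseteq E$, and then writes an arbitrary lift as $x\alpha$ with $x \in P^{ab}$ to get the coefficient $2u_{H}+1$. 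You instead take an \emph{arbitrary} lift $\tilde g$, split the loop representing $\tilde g^2$ as $\gamma \cdot (w\cdot\gamma)$, use $w^*\omega_H = \omega_H$ (from $\varphi_H\circ w = -\varphi_H$) to see the coordinate equals $2n_H(\gamma)$, and pin $n_H(\gamma)$ to $\tfrac12+\Z$ via the monodromy relation $e^{2\pi\ii\, n_H(\gamma)} = \varphi_H(wx_0)/\varphi_H(x_0) = -1$. This buys you a shorter argument with no basepoint perturbation and no explicit path construction, and it makes transparent why the parity obstruction is independent of the choice of lift; the paper's version, in exchange, exhibits a concrete order-$2d$-like lift $\alpha$ with $\bar\alpha = g$, which is reused in spirit in the later sections on regular elements. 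Your closing remarks (that the reduction to irreducible $W$ is unneeded here, and that the orientation conventions are harmless because $w=w^{-1}$) are both accurate.
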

\begin{proof}
Decomposing $W$ into irreducibles $W_1\times \dots \times W_r$,
we get $B/(P,P) \simeq \prod_{1 \leq k \leq r} B_k/(P_k,P_k)$ and thus we
can assume w.l.o.g. that $W$ is irreducible.

Assume by contradiction that there exists $\beta \in B$ such that $\beta^2 \in (P,P)$ and $\beta \not\in (P,P)$.
Since $P^{ab}$ is torsion-free we have $\beta \not\in P$.
Let
$g$ denote the image of $\beta$ in $W$. Then $g^2 =1$ and $g \neq 1$. Therefore
$\C^n = \Ker(g-1) \oplus \Ker(g+1)$ with $E = \Ker(g-1)$, and
$E^{\perp} = \Ker(g+1) \neq 0$. By Steinberg's theorem, the parabolic
subgroup $W_0 = \{ w \in W ; w_{|E} = 1 \}$ is generated
by $\mathcal{R} \cap W_0$. Since $g \in W_0$, we have $W_0 \neq \{ 1 \}$
and therefore there exists $s \in \mathcal{R}^* \cap W_0$. We denote by
$H_0 = \Ker(s-1)$ its reflecting hyperplane.
We associate to each $H \in \mathcal{A}$ a linear form $\varphi_H \in (\C^N)^*$
with kernel $H$.
We have $E \subset H_0$. We denote by $x_0 \in X = \C^N \setminus \bigcup \mathcal{A}$
the chosen basepoint, that we write as $x_0 = x_E + x_E^{\perp}$ according to
the decomposition $\C^N = E \oplus E^{\perp}$. We let $a(t) = x_E + \exp( \pi\ii t) x_E^{\perp}$. Changing the base point amounts to
conjugating all the elements of $B$ we are interested in by an element of $P$,
and therefore this 
affects neither
our conditions on $\beta$ -- namely $\beta^2 \in (P,P)$ and $\beta \not\in (P,P)$ -- nor $g$.
 Therefore, up to replacing $x_E$
by some multiple of it, we can assume that $\varphi_H(x_E) \neq 0 \Rightarrow |\varphi_H(x_E)| 
>|\varphi_H(x_E^{\perp})|$. This implies $\varphi_H(a(t)) \neq 0$ for all $t$ and all $H$, and
therefore $a$ defines an element $\alpha$ of $B$. Its square $\alpha^2 \in P$ is (the class of) the loop
$t \mapsto x_E + \exp(2  \pi\ii t) x_E^{\perp}$. Finally, the class $\bar{\alpha}$ of $\alpha$ in $W$
is equal to $g$. Therefore, we can write $\beta  = x \alpha$ for some $x \in P$.
Then $1=\beta^2 = x\alpha x \alpha  = x \alpha x \alpha^{-1} \alpha^2
= x (g. x) \alpha^2$. We have $\alpha^2 = \sum_{H \in \mathcal{A}} a_Hc_H$
with $a_H = (1/2  \pi\ii)\int_{\alpha^2} \frac{\dd \varphi_H}{\varphi_H}$.
If $E \subset H$, we have $\varphi_H(\alpha^2(t)) = \exp(2  \pi \ii t) \varphi_H(x_E^{\perp}) \neq 0$
and
$$
a_H = \frac{1}{2  \pi\ii}\int_{\alpha^2} \frac{\dd \varphi_H}{\varphi_H} = \frac{1}{2  \pi\ii} \int_0^1 2  \pi\ii \dd t = 1.
$$
In particular, we get $a_{H_0} = 1$.
Writing $x = \sum_H u_H c_H$ the equation $1 = x(g.x) \alpha^2$ yields
$$
0 = \sum_H u_H c_H + \sum_H u_H c_{g(H)} + \sum_{H \in \mathcal{A}} a_H c_H.
$$ 
Since $g(H_0) = H_0$, and $g(H) = H_0 \Rightarrow H = g^2(H) = g(H_0) = H_0$
the coefficient of $c_{H_0}$ is $0=2 u_{H_0} + 1$ with $u_{H_0} \in \Z$, a contradiction.
\end{proof}

\begin{remark}
In case $W$ is a real reflection group, hence $B$ is an Artin group of finite Coxeter
type, a partly combinatorial variation of this proof can be given, using Richardson's classification
of involutions in Coxeter groups. Richardson's theorem (see \cite{RICHARDSON}) indeed states that, if $W$ has $I$
for set of Coxeter generators, then $g$ is up to conjugation equal to $w_J$, for $w_J$ the
longest element of the standard parabolic subgroup $W_J$ generated by $J \subset I$. Moreover,
this element has to be central inside $W_J$. Choosing for $\alpha$ an Artin generator of $B$
corresponding to some element of $J$, and writing $\beta = x\alpha$ we get the same contradiction.
\end{remark}

\medskip

As a corollary of the above theorem, and following the idea of \cite{GGO}, one gets
Bieberbach groups associated to each complex reflection group.

\begin{corollary}
If $G$ is a 2-subgroup of $W$, then $\mathfrak{P}_W(G)$ is a Bieberbach group,
of dimension $|\mathcal{A}|$ and holonomy group $\bar{G}$.
\end{corollary}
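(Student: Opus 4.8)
The plan is to deduce the corollary formally from the material already in place: the first theorem of this section (giving that $B/(P,P)$ is crystallographic with holonomy $\overline{W}$), Theorem~\ref{thm:notwo} (no element of order $2$), and the discussion immediately preceding the statement. By that discussion, $\mathfrak{P}_W(G)$ is already known to be crystallographic, of dimension $|\mathcal{A}|$ and with holonomy group $\bar{G}$; so, since a torsion-free crystallographic group is by definition a Bieberbach group, the only point left to check is that $\mathfrak{P}_W(G)$ is \emph{torsion-free}. I would begin by recording that $\bar{G}$ is a finite $2$-group, being the image of the $2$-group $G$ under $W \to \overline{W} = W/Z(W)$, that is $\bar{G} \simeq G/(G \cap Z(W))$.

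For torsion-freeness there are two essentially equivalent routes. The first invokes verbatim the principle recalled in the reminder on Bieberbach groups: $\mathfrak{P}_W(G)$, the inverse image of $G$ under $B/(P,P) \to W$, is contained in the inverse image of $\bar{G}$ under the holonomy projection $B/(P,P) \to \overline{W}$ (because that projection kills $Z(W)$, so the preimage of $G$ sits inside the preimage of $G\,Z(W)$); by Theorem~\ref{thm:notwo} the group $B/(P,P)$ has no $2$-torsion, hence the preimage of its $2$-subgroup $\bar{G} \subset \overline{W}$ is torsion-free, and therefore so is the subgroup $\mathfrak{P}_W(G)$. The second route is direct: if $\gamma \in \mathfrak{P}_W(G)$ had finite order $>1$, then some power of $\gamma$ would have prime order $p$; its image in the holonomy $\bar{G}$ cannot be trivial, for otherwise this power would lie in the translation lattice $\simeq \Z^{N}$ ($N = |\mathcal{A}|$), which is torsion-free; hence $p$ divides $|\bar{G}|$, and as $\bar{G}$ is a $2$-group, $p=2$, so $\mathfrak{P}_W(G) \subset B/(P,P)$ would contain an element of order $2$, contradicting Theorem~\ref{thm:notwo}.

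Either way $\mathfrak{P}_W(G)$ is torsion-free, hence a Bieberbach group, of dimension $|\mathcal{A}|$ and holonomy $\bar{G}$ as already computed. I do not expect a genuine obstacle here: the whole weight of the corollary is carried by the first theorem of this section and by Theorem~\ref{thm:notwo}, the rest being the standard observation that a crystallographic group with $p$-group holonomy and no $p$-torsion is torsion-free. The only thing deserving a moment's care is the bookkeeping — that the holonomy of $\mathfrak{P}_W(G)$ is $\bar{G}$ (because $P^{ab}$ together with $Z_0(B)$ is maximal abelian in $B/(P,P)$, as in the proof of the first theorem) rather than $G$ itself, and correspondingly that the $2$-group relevant to the argument is $\bar{G} = G\,Z(W)/Z(W)$, not $G$.
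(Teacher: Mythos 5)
Your proposal is correct and follows essentially the same route as the paper: reduce to torsion-freeness, note that any finite-order element must have order a power of $2$ because its image lies in a $2$-group and the translation lattice is torsion-free, and then contradict Theorem~\ref{thm:notwo}. The paper phrases this with $x^{|G|}\in P^{ab}$ rather than passing to $\bar{G}$ and $P_0$, but the argument is the same.
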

\begin{proof}
We already noticed that $\mathfrak{P}_W(G)$ is crystallographic. If $x \in \mathfrak{P}_W(G)$
had finite order, then $x^{|G|} \in P^{ab}$ would have finite order, whence $x^{|G|} =1$
since $P^{ab}$ is torsion-free. Therefore the order of $x$ is a power of $2$, hence $\mathfrak{P}_W(G) \subset B/(P,P)$ would contain an element of order $2$, contradiction.
\end{proof}

Note that this corollary is (almost) always non-void, because the order of an irreducible reflection reflection
group of rank at least $2$ is always even. Indeed, the classification easily implies that they all contain a reflection
of order $2$, except 
for a few exceptional groups,
which all happen to have finite order (see table \ref{tab:exceptions} below). 

\section{Possible torsion inside $B/(P,P)$} 
\label{sect:posstorsion}

\subsection{Finite order elements and freely regular numbers}

We recall that a vector $x \in X$ is called \emph{regular} (in the sense of Springer) if there exists $w \in W$ for which
$x$ is an eigenvector. The order $d$ of the corresponding eigenvalue is called a regular integer (w.r.t. $W$).
Such a $w$ is called a regular element. It has necessarily
order $d$. For basic properties of regular elements we refer the reader to \cite{SPRINGER} or \cite{LEHRTAY}.

We say that such $x,w,d$ are \emph{freely} regular if in addition $\langle w \rangle$ acts freely on $\mathcal{A}$.
In that case, since the action of $\langle w \rangle$ has in particular to be faithful, $d$ coincides with the order
of $w$ inside $W/Z(W)$.

\begin{proposition} \label{prop:freelyimpliesfinite} Let $d$ be a freely regular element with respect to $W$. Then there exists $b \in B/(P,P)$
of order $d$.
\end{proposition}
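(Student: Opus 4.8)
The plan is to mimic, in the complex-braid setting, the construction of $\alpha$ that appeared in the proof of Theorem~\ref{thm:notwo}, replacing the order-$2$ rotation of $E^\perp$ by an order-$d$ rotation along a regular eigenvector. Let $x \in X$ be a freely regular vector, with $w \in W$ regular and $w x = \zeta x$ where $\zeta = e^{2\pi\ii/d}$. Consider the path $a : [0,1] \to \C^n$, $a(t) = e^{2\pi\ii t/d} x$. Since $x \in X$ and $X$ is stable under scalar multiplication by roots of unity (it is the complement of a union of linear hyperplanes), $a(t) \in X$ for all $t$, so $a$ defines an element $\beta \in B$ once we connect $x$ to the chosen basepoint $x_0$ by an auxiliary path (changing the basepoint only conjugates by an element of $P$, which does not affect membership in $(P,P)$ nor the order of the image in any quotient). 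The image $\bar\beta \in W$ is exactly $w$, by the very definition of the braid group as $\pi_1(X/W)$ and the fact that $a$ ends at $w^{-1} x = \zeta^{-1} x$... more precisely $a(1) = \zeta x = w x$, so the $W$-valued monodromy of the loop $a \bmod W$ is $w$. Hence $\beta^d \in P$ is the loop $t \mapsto e^{2\pi\ii t} x = \boldsymbol{\pi}$, which in $P^{ab}$ equals $\sum_{H \in \mathcal{A}} c_H = (1,\dots,1)$.

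Next I would pass to $B/(P,P)$ and let $b$ be the image of $\beta$, and compute the order of $b$. Clearly $b^d = \boldsymbol\pi \neq 1$ in $P^{ab} \simeq \Z^N$, so $d$ does not divide the order of $b$ directly; rather, $b$ has infinite order in $B/(P,P)$ unless we quotient further. The point is that $b$ should have order $d$ in $P_0$-coset terms — wait, that is not an element of finite order in $B/(P,P)$ itself. So the correct target is: the element $b$ has finite order $d$ in the quotient $(B/(P,P))/Z_0(B)$, or equivalently, one must adjust $\beta$ by a suitable power of $\mathbf{z}$. Concretely, since $w$ has order $d$ in $W/Z(W)$ (this is where free regularity enters: $\langle w\rangle$ acts faithfully on $\mathcal{A}$, so $\langle w\rangle \cap Z(W) = 1$), one checks that $w$ has some order $d m$ in $W$ with $m = |\langle w\rangle \cap Z(W)|$... actually if $\langle w \rangle$ acts freely on $\mathcal{A}$ then $\langle w\rangle \cap Z(W)=1$ forces the order of $w$ in $W$ to equal $d$. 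So $b^d = \boldsymbol\pi$, and I must modify: replace $\beta$ by $\beta' = \beta \cdot \mathbf{z}^{-k}$ for the unique $k$ (mod $d$, noting $\mathbf{z}^d = \boldsymbol\pi$) making $(\beta')^d = 1$ — indeed $(\beta')^d = \beta^d \mathbf{z}^{-kd} = \boldsymbol\pi \cdot \boldsymbol\pi^{-k}$, which is trivial for $k=1$. But I also need $\bar{\beta'}$ to still generate the right thing: $\bar{\mathbf{z}}$ is central in $W$ generating $Z(W)$ (when $W$ is irreducible), so $\bar{\beta'} = w \bar{\mathbf{z}}^{-1}$, still of order $d$ in $W/Z(W)$, hence $\beta' \notin (P,P)$ and no smaller power of $b' = $ image of $\beta'$ can lie in $P^{ab}$.

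The remaining verification is that no proper divisor $e \mid d$ gives $(b')^e = 1$. Since $\bar{\beta'}$ has order $d$ in $W/Z(W)$, if $(b')^e = 1$ in $B/(P,P)$ then $e$ is a multiple of the order of $\bar{\beta'}$ in $W/Z(W)$... no wait, the order of the image in $\overline W$ divides the order of $b'$, but $B/(P,P) \to \overline W$ is not the relevant map; the map is $B/(P,P) \to W$. If $(b')^e = 1$ then $w'^e = 1$ in $W$ where $w' = \bar{\beta'}$ has order $d$ in $W$ (again free regularity), so $d \mid e$, forcing $e = d$. Combined with $(b')^d = 1$, this shows $b'$ has order exactly $d$. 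I expect the main obstacle to be the bookkeeping around $\mathbf{z}$ and the precise claim that $w$ (or $w\bar{\mathbf{z}}^{\pm1}$) has order exactly $d$ in $W$ under the free-regularity hypothesis — this rests on the observation, recorded just before the proposition, that faithful $\Leftrightarrow$ the order in $W/Z(W)$ equals $d$, together with $\langle w\rangle \cap Z(W) = 1$. Once that is pinned down, the computation $(\beta \mathbf{z}^{-1})^d = \boldsymbol\pi \boldsymbol\pi^{-1} = 1$ in $B/(P,P)$ is immediate from $\mathbf{z}^d = \boldsymbol\pi$ and $\boldsymbol\pi = \sum_H c_H \in P^{ab}$ being killed, so $b' := $ class of $\beta\mathbf{z}^{-1}$ is the desired order-$d$ element.
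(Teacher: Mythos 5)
Your construction of $\beta=[\gamma]$ with $\beta^d=\boldsymbol{\pi}$ matches the paper's, but the correction step is where the argument breaks down. You write $(\beta\mathbf{z}^{-k})^d=\boldsymbol{\pi}\cdot\boldsymbol{\pi}^{-k}$, ``noting $\mathbf{z}^d=\boldsymbol{\pi}$''; this conflates the two meanings of $d$ in the paper. The actual relation is $\mathbf{z}^{|Z(W)|}=\boldsymbol{\pi}$, where $|Z(W)|$ is the order of the centre, not the regular number $d$ of the proposition, and the two rarely coincide. For $W=\mathfrak{S}_n$ with $n$ odd one has $Z(W)=1$, $\mathbf{z}=\boldsymbol{\pi}$ and $d=n$, so $(\beta\mathbf{z}^{-1})^d=\boldsymbol{\pi}^{1-n}\neq 1$ in $P^{ab}$. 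More fundamentally, no correction by an element \emph{commuting} with $\beta$ can possibly work: if $c=\sum_H u_Hc_H\in P^{ab}$ commutes with $\beta$, then additively $(\beta c)^d=\boldsymbol{\pi}+dc$, and $dc=-\boldsymbol{\pi}$ would force $du_H=-1$ in $\Z$, impossible for $d>1$; the same objection rules out powers of $\mathbf{z}$.

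The paper's fix is to correct by a \emph{non-commuting} element $p=\sum_H u_Hc_H\in P^{ab}$: since conjugation by $\beta$ acts on $P^{ab}$ through the permutation action of $w$ on $\mathcal{A}$, one gets $(\beta p)^d=\boldsymbol{\pi}+\sum_{k=1}^{d}w^k.p$, and the coefficient of $c_H$ in this sum is $1+|G_H|\sum_{K\in\mathcal{O}(H)}u_K$, where $G_H$ is the stabilizer of $H$ in $G=\langle w\rangle$ and $\mathcal{O}(H)$ its orbit. Freeness of the action gives $|G_H|=1$, so every coefficient can be killed by choosing the $u_K$ with all orbit sums equal to $-1$ (e.g.\ $u_H=-1$ on a set of orbit representatives and $0$ elsewhere); if some $|G_H|>1$ this equation has no integer solution. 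This is the one place where the hypothesis ``freely regular'' is genuinely needed --- your proof only invokes it to get $\langle w\rangle\cap Z(W)=1$, which is not where the content lies. The final part of your argument, that the resulting element has order exactly $d$ because its image in $W$ does, is fine and agrees with the paper.
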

\begin{proof}
Let $w \in W$, $x \in X$ and $\zeta \in \C^{\times}$ of order $d$ such that
$w.x = \zeta x$ and there is no $H \in \mathcal{A}$ such that $w(H) = H$.
Up to raising $w$ to some power coprime to $d$, we can assume $\zeta = \exp(2  \pi\ii/d)$.
Let $\gamma(t) = \exp(2  \pi\ii t/d) x$. Since $x \in X$ and $X$ is defined by
linear (in)equations it defines a path $\gamma : [0,1] \to X$ joining $x$ and $\exp(2  \pi\ii/d) x = w.x$.
Therefore it defines a  class $[\gamma] \in B = \pi_1(X/W,\bar{x})$. It is straightforward to
check that $[\gamma]^d = \boldsymbol{\pi}$, and we recall that the image $w$ of $[\gamma]$ in $B/P = W$
has order $d$. Let us consider a collection $(u_H)_{H \in \mathcal{A}}$ of integers,
$p = \sum_H u_H c_H \in P/(P,P)$. Let $\tilde{p} \in B$ designate a lift of $p$, and set $\tilde{q} = [\gamma] \tilde{p} \in B$.
We denote $q$ the image of $\tilde{q}$ inside $B/(P,P)$. The image of $\tilde{q}$ in $B/P = W$ is $w$ and therefore has order $d$. It follows that the
order of $q$ is at least $d$. Since
$$
\tilde{q}^d = ([\gamma] \tilde{p})^d = \,^{[\gamma]} \tilde{p}.\,^{[\gamma]^2} \tilde{p}.\,^{[\gamma]^3} \tilde{p}.\dots . \,^{[\gamma]^d} \tilde{p}.[\gamma]^d
$$
we get
$
q^d = (w.p)(w^2.p)\dots (w^d.p) \bar{\boldsymbol{\pi}} =\sum_{H \in \mathcal{A}} c_H +   \sum_{k=1}^d \sum_{H \in \mathcal{A}}  u_H c_{w^k(H)} 
$. Let $G = \langle w \rangle \subset W$. We have $G \simeq \Z/d\Z$ and $G$ acts freely on $\mathcal{A}$. Let $\mathcal{B} \subset \mathcal{A}$
be a system of representatives of $\mathcal{A}/G$. We associate to each $H$ its orbit $\mathcal{O}(H) \in \mathcal{A}/W$. Then, the
above equation can be reformulated as
$$
q^d = \sum_{H \in \mathcal{A}} c_H + \sum_{H \in \mathcal{A}} \sum_{g \in G} u_{g(H)} c_H
=  \sum_{H \in \mathcal{B}} \sum_{J \in \mathcal{O}(H)}  c_J + \sum_{H \in \mathcal{B}} \sum_{J \in \mathcal{O}(H)} \sum_{g \in G} u_{g(J)} c_J.
$${}
Since the action is free, this is equal to
$$
 \sum_{H \in \mathcal{B}} \sum_{J \in \mathcal{O}(H)}  c_J + \sum_{H \in \mathcal{B}} \sum_{J \in \mathcal{O}(H)}\left( \sum_{K \in \mathcal{O}(H)} u_{K} \right) c_J
= \sum_{H \in \mathcal{B}}\left(\left( 1+\sum_{K \in \mathcal{O}(H)} u_{K} \right) \left( \sum_{J \in \mathcal{O}(H)}  c_J  \right) \right)
$$
and thus $q^d = 1$ if and only if, for every orbit $\mathcal{O} \in \mathcal{A}/G$, we have $\sum_{K \in \mathcal{O}} u_K = -1$.
Since this condition is easy to fulfill (e.g. take $u_H = -1$ for $H \in \mathcal{B}$ and $u_H = 0$ for $H \not\in \mathcal{B})$
there exists $q \in B/(P,P)$ of order $d$.

\end{proof}

In particular, the symmetric group $\mathfrak{S}_n$ contains a regular element $(1 \ 2 \ \dots \ n)$ of order $n$, which is freely
regular if and only if $n$ is odd. From this we recover proposition 19 of \cite{GGO}, namely that $B_n/(P_n,P_n)$ has
elements of order $n$ if $n$ is odd, and therefore of order $k$ for $k \leq n$ odd.
Note also that, if $W$ is decomposed into a sum of irreducibles $W_1 \times \dots
\times W_r$, then every collection $(b_1,\dots,b_r)$ of elements of
$B_k/(P_k,P_k)$, $k=1,\dots,r$ of finite orders $m_1,\dots,m_r$,
defines an element of order $m = \lcm(m_1,\dots,m_r)$ of the group
$B_1/(P_1,P_1)\times \dots \times B_r/(P_r,P_r) \simeq B/(P,P)$.
It follows that, if $W$ admits a parabolic subgroup $W'$ which is a sum
$W_1 \times \dots \times W_r$ of such irreducibles, then by proposition
\ref{prop:injparab} we know $B/(P,P)$
also contains elements of order $m = \lcm(m_1,\dots,m_r)$,
and even a subgroup isomorphic to $(\Z/m_1 \Z) \times \dots \times (\Z/m_r\Z)$.

In particular, the results above reprove theorem 3 and theorem 6 of \cite{GGO}.
This is clear for theorem 3. For theorem 6, since a transitive
abelian permutation group has the same order as its degree, we know that
every abelian subgroup $(\Z/m_1 \Z) \times \dots \times (\Z/m_r\Z)$ possibly embedding in
$\mathfrak{S}_n$ must satisfy $m_1+\dots+m_r \leq n$ (see e.g. \cite{HOFFMAN}, prop. 2), and in particular,
since we know how to build such a group from a parabolic $\mathfrak{S}_{m_1}\times\dots\times \mathfrak{S}_{m_r}$ this reproves that the isomorphism
types of abelian subgroups that can be embedded into $B_n/(P_n,P_n)$
are exactly the same that can be embedded into 
$\mathfrak{S}_n$.

Propositions \ref{prop:freelyimpliesfinite} and \ref{prop:injparab}, combined together, produce lots of elements of finite order inside $B/(P,P)$. Note that proposition \ref{prop:freelyimpliesfinite} alone
is not enough to produce all the possible orders : there are parabolic subgroups $W_0 \subset W$
where $W_0$ admits a freely regular degree $d$ which is not a freely regular degree for $W$
(for example $W_0$ of type $E_7 = G_{36}$ as a parabolic subgroup of $W$ of type $E_8 = G_{37}$, see
section \ref{sec:tableexc} and table \ref{tab:exceptions}), and by proposition \ref{prop:injparab} this provides
an element of degree $d$ inside $B$ not directly produced by proposition \ref{prop:freelyimpliesfinite}.

Nevertheless, proposition \ref{prop:freelyimpliesfinite} admits a partial converse.

\begin{proposition} \label{prop:finiteimpliesfreely} Let $d$ be a regular number for $W$. If $B/(P,P)$ has an element
of order $d$ whose image in $W$ is regular, then $d$ is freely regular.
\end{proposition}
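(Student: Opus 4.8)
The plan is to run the computation from the proof of Proposition~\ref{prop:freelyimpliesfinite} backwards. Let $b\in B/(P,P)$ have order $d$ and let $w\in W$ be its image. First I would note that $w$ itself has order exactly $d$: if $w^{d'}=1$ then $b^{d'}$ lies in $\Ker(B/(P,P)\to W)=P^{ab}$, which is torsion-free, yet $(b^{d'})^{d}=1$, forcing $b^{d'}=1$ and hence $d\mid d'$; together with $d'\mid d$ this gives $d'=d$. Being regular of order $d$, the element $w$ therefore admits a regular eigenvector $x\in X$ with eigenvalue $\zeta=\exp(2\pi\ii a/d)$ for some integer $a$ coprime to $d$.

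Next, as in Proposition~\ref{prop:freelyimpliesfinite}, I would set $\gamma(t)=\exp(2\pi\ii at/d)x$, a path from $x$ to $w.x$ that (since $X$ is stable under $\C^{\times}$) descends to a loop in $X/W$ and so defines a lift $[\gamma]\in B$ of $w$, with $[\gamma]^{d}=\boldsymbol{\pi}^{a}$, i.e. the class $a\sum_{H\in\mathcal{A}}c_{H}$ in $P^{ab}$ (taking the base point to be $x$, which is harmless up to $P$-conjugacy). Since $P^{ab}=\Ker(B/(P,P)\to W)$, one can write $b=[\gamma]\,p$ with $p=\sum_{H}u_{H}c_{H}\in P^{ab}$. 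Expanding $b^{d}$ exactly as in that proof — using that conjugation by $[\gamma]$ acts on $P^{ab}$ through the permutation action of $w$ on $\mathcal{A}$ — yields
$$
b^{d}=a\sum_{H\in\mathcal{A}}c_{H}+\sum_{k=0}^{d-1}w^{k}.p=\sum_{H\in\mathcal{A}}\Bigl(a+\sum_{g\in\langle w\rangle}u_{g(H)}\Bigr)c_{H}.
$$
Grouping $\mathcal{A}$ into $\langle w\rangle$-orbits, if $\mathcal{O}$ is an orbit whose point-stabilizers have order $e_{\mathcal{O}}$ then the coefficient of $c_{H}$ for $H\in\mathcal{O}$ is $a+e_{\mathcal{O}}\sum_{K\in\mathcal{O}}u_{K}$.

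Finally, since $b$ has order $d$ we have $b^{d}=1$, so $e_{\mathcal{O}}\sum_{K\in\mathcal{O}}u_{K}=-a$ for every orbit $\mathcal{O}$, whence $e_{\mathcal{O}}\mid a$; but $e_{\mathcal{O}}\mid d$ and $\gcd(a,d)=1$, so $e_{\mathcal{O}}\mid\gcd(a,d)=1$, i.e. $e_{\mathcal{O}}=1$. Thus $\langle w\rangle$ acts freely on $\mathcal{A}$, so $(w,x,d)$ is freely regular and $d$ is a freely regular number. The one point that needs care is the bookkeeping with the integer $a$: because here $w=\bar b$ is fixed we cannot, as in Proposition~\ref{prop:freelyimpliesfinite}, replace it by a power of itself to normalise the eigenvalue to $\exp(2\pi\ii/d)$; but the coprimality $\gcd(a,d)=1$ is automatic and is precisely what makes the final divisibility contradiction go through. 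Apart from that, the argument is a direct rerun of the computation already carried out, the genuinely new input being only the elementary observation $e_{\mathcal{O}}\mid\gcd(a,d)=1$.
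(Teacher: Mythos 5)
Your proof is correct and follows essentially the same route as the paper's: lift the regular image $w$ to $B$ via the spiral path $t\mapsto e^{2\pi\ii at/d}x$ through a regular eigenvector, write $b=[\gamma]\,p$ with $p\in P^{ab}$, expand $b^{d}$ in $P^{ab}$, and read off from the vanishing of each coefficient that every orbit-stabilizer order divides both $a$ and $d$, hence equals $1$. The only difference is cosmetic: the paper tacitly normalizes the regular eigenvalue to $e^{2\pi\ii/d}$ (legitimate, e.g.\ by replacing $b$ with a power $b^{m}$, $m$ prime to $d$, which does not change $\langle w\rangle$), whereas you carry the integer $a$ through the computation and invoke $\gcd(a,d)=1$ at the end --- both versions are fine, and your explicit bookkeeping of $a$ actually fills in a detail the paper leaves implicit.
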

\begin{proof} Let $x$ be such an element, and let $g_0 \in W$ denote its image. We can choose
the base-point so that it is a regular eigenvector for $g_0$, and therefore we get as before
a lift $g \in B/(P,P)$ with $g^d = \boldsymbol{\pi} = \sum_{H \in \mathcal{A}} c_H$. Now $x = g y$ for some
$y = \sum_H y_H c_H$ in $P^{ab}$, with $y_H \in \Z$. Then, $x^d = (g_0.y)(g_0^2.y)\dots(g_0^{d-1}.y) y g^d$ is equal to
$$
\sum_{H \in \mathcal{A}} \left( \frac{|G|}{|G_H|} \left( \sum_{J \in G.H} y_J \right) +1 \right) c_H
$$
where $G = \langle g_0 \rangle$, $G_H \subset G$ is the fixer of $H \in \mathcal{A}$,
and $G.H$ denote its orbit. Thus $x^d =1$ implies $|G| = |G_H|$ for all $H$, thereby
proving that $d$ is freely regular.
\end{proof}

\begin{corollary}
Let $G$ is a subgroup of $W$ such that
\begin{enumerate}
\item no freely regular prime number divides $|G|$
\item for every odd prime number $p$ dividing $|G|$, all elements of order $p$ are regular
\end{enumerate}
Then, $\mathfrak{P}_W(G)$ is a Bieberbach group.
\end{corollary}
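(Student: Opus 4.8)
The plan is to show that $\mathfrak{P}_W(G)$ is torsion-free, since we already know it is crystallographic. So suppose $x \in \mathfrak{P}_W(G)$ has finite order $n > 1$. As in the proof of the previous corollary, some power of $x$ lies in $P^{ab}$, which is torsion-free, so we may reduce to the case where $n$ is a prime $p$ dividing $|G|$ (after replacing $x$ by a suitable power). By Theorem \ref{thm:notwo}, $p$ cannot be $2$, so $p$ is an odd prime dividing $|G|$. Let $g_0 \in W$ be the image of $x$; then $g_0$ has order dividing $p$.

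Next I would rule out $g_0 = 1$: if $g_0 = 1$ then $x \in P_0$, the kernel of $B/(P,P) \to \overline{W}$. But $P_0 \simeq \Z^N$ is torsion-free by the Theorem, so $x$ would be trivial, contradicting $n = p > 1$. Hence $g_0$ has order exactly $p$. Now hypothesis (2) applies: $g_0$ is a regular element of $W$. Thus $x \in B/(P,P)$ is an element of order $p$ whose image $g_0$ in $W$ is regular, and $p$ is a regular number (being the order of a regular element). By Proposition \ref{prop:finiteimpliesfreely}, $p$ is freely regular. But then $p$ is a freely regular prime number dividing $|G|$, contradicting hypothesis (1). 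This contradiction shows $\mathfrak{P}_W(G)$ has no nontrivial torsion, so it is a Bieberbach group.

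The one point requiring a little care is the reduction to $n$ prime and the extraction of a regular element of order exactly a prime $p$. Starting from $x$ of finite order $n$, write $n = p^a m$ with $p \nmid m$ and $p$ a prime divisor of $|G|$ that we wish to target. The element $x^{n/p}$ has order $p$; we need its image in $W$ to still have order $p$ so that hypothesis (2) is applicable. This follows because, were the image of $x^{n/p}$ trivial, then $x^{n/p} \in P_0 \simeq \Z^N$ would be torsion-free hence trivial, contradicting that $x^{n/p}$ has order $p$. One must also check at the outset that $n$ actually has a prime divisor dividing $|G|$: indeed the image of $x$ in $\overline{W}$ has order dividing both $n$ and $|\overline{G}|$, and if that image were trivial then $x \in P_0$ is trivial; so some prime $p \mid n$ divides $|\overline{G}|$, hence divides $|G|$. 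After these observations the argument is immediate, so I expect no genuine obstacle — the corollary is essentially a direct combination of Theorem \ref{thm:notwo}, Proposition \ref{prop:finiteimpliesfreely}, and the torsion-freeness of $P_0$.
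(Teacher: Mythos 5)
Your proof is correct and follows essentially the same route as the paper's: reduce to an element of prime order $p$, rule out $p=2$ by Theorem \ref{thm:notwo}, use torsion-freeness of $P^{ab}$ (resp.\ $P_0$) to see that the image in $W$ has order exactly $p$, invoke hypothesis (2) to get regularity, and conclude via Proposition \ref{prop:finiteimpliesfreely} that $p$ would be freely regular, contradicting (1). The extra care you take with the reduction to prime order and with the image being nontrivial is exactly what the paper leaves implicit.
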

\begin{proof}
Assume that $x$ is a finite-order element in $\mathfrak{P}_W(G)$. Up to raising it
to some power, we can assume this order $p$ is a prime number. By theorem \ref{thm:notwo} it
is odd. Let $\bar{x}$ denote its image in $G \subset W$. Since $P^{ab}$ is torsion-free,
the order of $\bar{x}$ is equal to $p$. Since it divides $|G|$, it is a regular number by (2),
and also $\bar{x}$ is a regular element.
By proposition \ref{prop:finiteimpliesfreely} this implies that $p$ is freely regular, contradicting (1). This proves the claim.
\end{proof}

Of course these conditions are quite strong. They are nevertheless satisfied in some cases,
which enables us at least to construct some flat manifolds whose holonomy groups
exceed the class of  2-groups.

\begin{corollary} \label{cor:G4} If $W = G_4$, then $B/(P,P)$ is a Bieberbach group, of dimension $4$.  If $W = G_{6}$, then $B/(P,P)$ is a Bieberbach group, of dimension $10$. Both have for holonomy group the alternating group $\mathfrak{A}_4$.
\end{corollary}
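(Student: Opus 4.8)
The plan is to apply the preceding corollary to $G = W/Z(W)$ in each of the two cases and then to identify the holonomy group. First I would recall the relevant structural data for the two exceptional groups $G_4$ and $G_6$: both are irreducible complex reflection groups of rank $2$, the center $Z(G_4)$ has order $2$ while $Z(G_6)$ has order $2$ as well (so $\overline{W} = W/Z(W)$ has order $|W|/2$), and in both cases $\overline{W} \simeq \mathfrak{A}_4$, which has order $12$. Since $\mathfrak{P}_W(W)$ equals all of $B/(P,P)$ (taking $G = W$), the dimension is $N = |\mathcal{A}|$; for $G_4$ one has $4$ reflecting hyperplanes, and for $G_6$ one has $10$, which gives the asserted dimensions $4$ and $10$. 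The holonomy group of $B/(P,P)$ is, by the first Theorem of the paper, exactly $\overline{W} = \mathfrak{A}_4$ in both cases.

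Next I would verify the two hypotheses of the corollary with $G = W$. Since $|\mathfrak{A}_4| = 12 = 2^2 \cdot 3$, the only odd prime dividing $|\overline{W}|$ is $p = 3$. For hypothesis (1) I must check that $3$ is \emph{not} a freely regular number for $W$. The regular numbers of $G_4$ are its degrees and codegrees data: $G_4$ has degrees $4, 6$ and the regular numbers include $1,2,3,4,6$; similarly for $G_6$ (degrees $4, 12$). So $3$ \emph{is} a regular number in both cases. I would then argue that it is \emph{not} freely regular: a freely regular element of order $3$ would have to act freely on the set $\mathcal{A}$ of $4$ (resp.\ $10$) hyperplanes, but one checks directly from the action of $W$ on $\mathcal{A}$ that every element of order $3$ in $W$ fixes at least one hyperplane (for $G_4$ acting on $4$ hyperplanes this is immediate since $3 \nmid 4$; for $G_6$ with $10$ hyperplanes one inspects the cycle type of order-$3$ elements on $\mathcal{A}$ and finds a fixed point, equivalently $3$ does not divide into the relevant orbit sizes to give a free action). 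For hypothesis (2) I must check that every element of order $3$ in $W$ is a regular element. Here I would use that in $\mathfrak{A}_4$ the elements of order $3$ form (two) conjugacy classes, and lift: an element of order $3$ in $W$ maps to an element of order $3$ in $\overline{W}$, and I must show its eigenvector is regular, i.e.\ lies in $X$. For a rank-$2$ group this is a short explicit computation — an order-$3$ element of $G_4$ or $G_6$ that is not a reflection has eigenvalues that are primitive cube roots of unity (after scaling), and its eigenvectors avoid all reflecting hyperplanes precisely because an eigenvector lying on a hyperplane would force the element to preserve that hyperplane with the wrong eigenvalue structure; one checks this does not occur.

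Having verified both hypotheses, the corollary immediately gives that $B/(P,P) = \mathfrak{P}_W(W)$ is a Bieberbach group, with dimension $|\mathcal{A}|$ and holonomy $\overline{W} = \mathfrak{A}_4$, in both cases. The main obstacle I anticipate is hypothesis (2): showing that \emph{all} order-$3$ elements (not just one conjugacy class) are regular requires genuinely looking at the two classes of order-$3$ elements in $\mathfrak{A}_4$ and their preimages in $W$, and confirming regularity for each; this is where a concrete matrix model of $G_4$ and $G_6$ (as in the Shephard--Todd tables, e.g.\ $G_4$ generated by two order-$3$ pseudo-reflections) is needed. The freely-regular check in hypothesis (1) is the lighter of the two, since for $G_4$ the numerology $3 \nmid 4$ already forbids a free action on $\mathcal{A}$, and for $G_6$ a single orbit-count on the $10$ hyperplanes settles it. I would close by remarking that this is consistent with the general principle stated in the paper's introduction: no use of the classification is strictly needed except for the deduction about freely regular numbers, and here the computation is small enough to be done by hand.
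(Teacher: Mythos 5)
Your overall strategy coincides with the paper's: apply the preceding corollary with $G=W$, read off the dimension as $|\mathcal{A}|$ ($4$ for $G_4$, $10$ for $G_6$) and the holonomy group as $\overline{W}\simeq\mathfrak{A}_4$. Your treatment of hypothesis (1) for $p=3$ is clean and arguably more transparent than the paper's ``one checks easily'': since $3\nmid 4$ and $3\nmid 10$, no cyclic group of order $3$ can act freely on $\mathcal{A}$, so $3$ cannot be freely regular. Do note, however, that hypothesis (1) quantifies over all primes dividing $|G|$, so you should also dispose of $p=2$; this is immediate because $2$ is never freely regular for any $W$ (a freely regular $2$ would produce an element of order $2$ in $B/(P,P)$ by Proposition \ref{prop:freelyimpliesfinite}, contradicting Theorem \ref{thm:notwo}).

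The genuine divergence --- and the one place your argument is not yet a proof --- is hypothesis (2). You propose to verify by explicit matrix computation that every order-$3$ element is regular, but you leave the decisive step at ``one checks this does not occur'', and your case split is off: in $G_4$ the eight elements of order $3$ are exactly the eight reflections, so the case ``order-$3$ element that is not a reflection'' is vacuous there. The paper closes this step with no computation at all: since $|W|=2^3\cdot 3$ (resp. $2^4\cdot 3$), the Sylow $3$-subgroups have order $3$ and are all conjugate, so every element of order $3$ is a conjugate of a power of any fixed one; since $3$ is a regular number there exists a regular element of order $3$, and conjugates and powers of a regular element are again regular (they admit the same regular eigenvector, up to the $W$-action). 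Hence all order-$3$ elements are regular. I recommend you substitute this argument for your computation. Two small corrections besides: $|Z(G_6)|=4$, not $2$ (immaterial here, since only $\overline{W}\simeq\mathfrak{A}_4$ and $|\mathcal{A}|=10$ enter the proof), and your opening phrase ``apply the corollary to $G=W/Z(W)$'' should read $G=W$, as you in fact do later.
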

\begin{proof} We first assume $W = G_4$. One has $|W| = 24 = 2^3.3$, the regular numbers are $1, 2, 3, 4, 6$
(see section \ref{sec:tableexc} and table \ref{tab:exceptions}), and one checks easily that there are no freely regular numbers. 
The pseudo-reflections of $G_4$ have order $3$, and by the Sylow theorems
every element of order $3$ is conjugated to one of them. Since $3$ is a regular number
they are regular and condition (2) of the previous corollary is satisfied. Since $|\mathcal{A}| = 4$
and $\overline{W} = \mathfrak{A}_4$ the conclusion follows. Now assume $W = G_6$.
Then $|W| = 2^4.3$, $3$ is regular, again there are no freely regular numbers,
and the same argument applies since one class of reflections for $W$ has order $3$.
Since $|\mathcal{A}| = 10$
and $\overline{W} = \mathfrak{A}_4$ the conclusion follows.
\end{proof}

It can be checked that there are no other exceptional group on which the above
criterion can be applied, with $G = W$.

\subsection{Determination of the freely regular numbers}
In case $W$ is a real reflection group, the freely regular numbers are fairly easy to determine, because of the next proposition.

\begin{proposition} \label{prop:freeregreal} If $W$ is a real reflection group, then its freely regular numbers are exactly its \emph{odd} regular numbers.
\end{proposition}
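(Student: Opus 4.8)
The plan is to prove the two inclusions ``freely regular $\Rightarrow$ odd regular'' and ``odd regular $\Rightarrow$ freely regular'' separately; both rest on the same elementary computation, and only the first requires an extra ingredient. Throughout I fix a $W$-invariant inner product on $\R^n$ (it exists because $W$ is real) and extend it to a symmetric bilinear form $\langle\cdot,\cdot\rangle$ on $\C^n$; for each $H\in\mathcal{A}$ I choose a vector $\alpha_H$ spanning the line $H^{\perp}$, so that $H=\Ker\langle\alpha_H,-\rangle$ and, for $x\in X$, $\langle\alpha_H,x\rangle\neq0$. The one preliminary remark is that, because $W$ is real, for $g\in W$ and $H\in\mathcal{A}$ one has $g(H)=H$ if and only if $g\alpha_H=\pm\alpha_H$ (the scalar has modulus $1$ by orthogonality, and is real since $g$ and $\alpha_H$ are).

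For ``odd regular $\Rightarrow$ freely regular'': let $d$ be an odd regular number, take a regular element $w$ of order $d$ and a regular eigenvector $x\in X$ with $w.x=\zeta x$, $\zeta$ a primitive $d$-th root of unity. If $w^k(H)=H$ for some $H\in\mathcal{A}$ and some $0<k<d$, then $w^k\alpha_H=\eps\alpha_H$ with $\eps=\pm1$, and $W$-invariance of $\langle\cdot,\cdot\rangle$ applied to the pair $(\alpha_H,x)$ gives $\eps\zeta^k\langle\alpha_H,x\rangle=\langle\alpha_H,x\rangle$; since $\langle\alpha_H,x\rangle\neq0$ this forces $\eps\zeta^k=1$. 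But $\zeta^k\neq1$ (as $0<k<d$) and $\zeta^k\neq-1$ (as $d$ is odd), a contradiction. Hence $\langle w\rangle$ acts freely on $\mathcal{A}$, so $d$ is freely regular.

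For the converse I would argue by contradiction: suppose $d$ is freely regular but even, witnessed by a regular element $w$ of order $d$ all of whose powers act freely on $\mathcal{A}$. Then $g:=w^{d/2}$ is an involution with $g\neq1$, and $g$ fixes $V:=\Ker(g-1)$ pointwise, so $g$ lies in the parabolic subgroup $W_V=\{v\in W;v_{|V}=1\}$. By Steinberg's theorem $W_V$ is generated by the reflections it contains, and since $W_V\neq\{1\}$ it contains some reflection $s$; its reflecting hyperplane $H=\Ker(s-1)$ contains $V$, hence $\alpha_H\in V^{\perp}$. As $g$ is an orthogonal involution, $\R^n=\Ker(g-1)\oplus\Ker(g+1)$ orthogonally, so $\alpha_H\in\Ker(g+1)$, i.e. $g\alpha_H=-\alpha_H$ and $g(H)=H$. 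Since $0<d/2<d$ this contradicts the freeness of the $\langle w\rangle$-action. Therefore $d$ is odd, and it is regular by the definition of a freely regular number; this proves the proposition.

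The step I expect to be the real content is the converse: the computation shows that $g(H)=H$ is merely \emph{possible} when $d$ is even, and exhibiting such an $H$ genuinely uses that the pointwise stabilizer $W_V$ is itself a reflection group (Steinberg). In the Coxeter case one could replace this by Richardson's classification of involutions, as in the Remark following theorem \ref{thm:notwo}: $g$ is then conjugate to an element $w_J$ acting as $-1$ on the span of the simple roots indexed by $J$, which is nonzero since $g\neq1$, so $g$ fixes the corresponding reflecting hyperplanes.
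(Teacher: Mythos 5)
Your proof is correct, and both halves take a genuinely different route from the paper's. For ``freely regular $\Rightarrow$ odd'', the paper does not argue inside $W$ at all: it combines Proposition \ref{prop:freelyimpliesfinite} (a freely regular number $d$ yields an element of order $d$ in $B/(P,P)$) with Theorem \ref{thm:notwo} (no $2$-torsion in $B/(P,P)$), so an even freely regular $d$ would give an element of order $2$. Your version extracts the geometric core of the proof of Theorem \ref{thm:notwo} --- the involution $w^{d/2}$ lies in a pointwise stabilizer which by Steinberg's theorem contains a reflection, whose hyperplane it then stabilizes --- and thereby stays entirely inside $W$, with no braid-theoretic input; the cost is redoing work the paper has already banked. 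For ``odd regular $\Rightarrow$ freely regular'', the paper chooses a root system $R=\{\pm v_H\}$, notes that $g(H)=H$ forces $g^2 v_H=v_H$ with $g^2$ still regular of order $d$ (here oddness is used), and invokes Springer's Proposition 4.10(i) on freeness of the action on $R$. You instead reprove the needed freeness directly via the $W$-invariant symmetric bilinear form: $w^k\alpha_H=\eps\alpha_H$ and $\langle\alpha_H,x\rangle\neq 0$ force $\eps\zeta^k=1$, impossible for $d$ odd and $0<k<d$. This is exactly Kostant's argument that the paper itself borrows later (in the proof that $d$ coprime to $\kappa(W)$ implies $d$ freely regular), so your treatment is more self-contained and avoids the external citation, at the price of a slightly longer write-up. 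One small point worth making explicit if you keep your version: the identification $H=\Ker\langle\alpha_H,-\rangle$ and the equivalence $g(H)=H\Leftrightarrow g\alpha_H=\pm\alpha_H$ both use that $g$ preserves the real form $\R^n$ and the real orthogonal decomposition, which is where the hypothesis that $W$ is real enters; you do say this, and it is the correct place to flag it.
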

\begin{proof}
Let $g \in W$ be regular of order $d$. If $d$ is freely regular, it has to be odd by theorem \ref{thm:notwo} and proposition
\ref{prop:freelyimpliesfinite}.
Assume
conversely that $d$ is odd, and choose a root system $R = \{ \pm v_H, H \in \mathcal{A} \}$. Assume by
contradiction that the action of $g$ on $\mathcal{A}$ is not free. This means that $g. v_H = \pm v_H$
for some $H$, and thus $g^2.v_H = v_H$. Since $g^2$ is still regular of order $d$, this contradicts
the freeness of its action on $R$, known by \cite{SPRINGER} proposition 4.10 (i) -- note that, although this freeness statement is stated there only for rational reflection groups, the proof is valid for an arbitrary real reflection group.
\end{proof}

In case of complex reflection groups, the situation is more complicated. Let us introduce the order $\kappa(W)$
of the extension $1 \to P/(P,P) \to B/(P,P) \to W \to 1$ as an element of $H^2(W,P^{ab})$. This integer,
which originally appeared in \cite{ARREFL} as the periodicity of a monodromy representation of $B$,
and was subsequently identified by V. Beck in \cite{BECK} as the order of this extension, can be described as follows.
First introduce, for every reflecting hyperplane $H \in \mathcal{A}$, the parabolic subgroups $W_H = \{ w \in W \ | \ w_{|H} = \Id_H \}$ and
$C(H) = \{ w \in W \ | \ w_{|H^{\perp}} = \Id_{H^{\perp}} \}$.  They are normal subgroups of $N(H) = \{ w \in W \ | \ w(H) = H \}$
and we have $W_H \cap C(H) = \{ \Id \}$.
We set $e_H = |W_H|$, $f_H = |N(H)/C(H)|$. Clearly $e_H$ divides $f_H$. Then $\kappa(W) = \lcm \{ f_H ; H \in \mathcal{A} \}$ (\cite{BECK}, corollary 1). It is a general fact that $|Z(W)|$ divides $\kappa(W)$ (see \cite{ARREFL}, cor. 5.10).

In case $W$ is a Coxeter group, we have $\kappa(W) = 2$ (see  \cite{ARREFL}, theorem 6.4 or \cite{BECK} remark 2).
If $W = G(de,e,n)$ with $n \geq 2$, then (see \cite{ARREFL} proposition 6.1, corrected in \cite{BECK})
\begin{itemize}
\item $\kappa(W)=2de$ if $de$ is odd and $n \geq 3$.
\item $\kappa(W) = de$ if $d\neq 1$, $n=2$, $de$ even.
\item $\kappa(W) = de$ if $n\geq 3$, $de$ even.
\item $\kappa(W) = 2de$ if $d \neq 1$, $n=2$ and $de$ is odd.
\item $\kappa(W) = 2$ if $d=1$ and $n=2$.
\end{itemize}
Note that $\kappa(W)$ is an even integer for every complex reflection group.
Moreover, when $W = G(de,e,n)$ and $W$ is not of the form
$G(e,e,2)$, we have $\kappa(W) = \lcm(2, de)$. In particular this formula
is always valid in rank at least 3, and also when $W$ is \emph{not} a real reflection group.

\begin{theorem} Let $d$ be a regular number. 
The integer $d$ is coprime to $\kappa(W)$ if and only if $d$ is freely regular.
\end{theorem}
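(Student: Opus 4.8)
The plan is to characterize, for a regular element $g$ of order $d$, exactly when the cyclic group $G=\langle g\rangle$ fails to act freely on $\mathcal{A}$, and to show this failure is governed by a prime common to $d$ and $\kappa(W)$. First I would recall that $G$ acts freely on $\mathcal{A}$ if and only if no nontrivial power $g^k$ fixes a hyperplane $H$, i.e. if and only if $G\cap N(H)/C(H)$ — wait, more precisely if and only if the image of $G$ in each permutation of $\mathcal{A}$ has trivial stabilizers; equivalently, no nontrivial element of $G$ lies in $N(H)$ acting nontrivially... the cleanest formulation: $g^k(H)=H$ for some $0<k<d$ means $g^k\in N(H)$, so non-freeness is equivalent to $G\cap N(H)\ne\{1\}$ for some $H$. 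Since a regular element $g$ centralizes a regular eigenvector and its powers are again regular, I would use Springer's theory (as invoked in Proposition \ref{prop:freeregreal}) to control which powers of $g$ can stabilize a hyperplane. The key numerical input is Beck's formula $\kappa(W)=\lcm\{f_H : H\in\mathcal{A}\}$ with $f_H=|N(H)/C(H)|$, and the fact that $N(H)$ acts on the pencil of $W$-eigenvalues at $H$ through the cyclic group $N(H)/C(H)$ of order $f_H$.

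For the direction ``$d$ coprime to $\kappa(W)$ $\Rightarrow$ $d$ freely regular'': suppose some $g^k\in N(H)$ with $0<k<d$, chosen so that $g^k$ has prime order $p$ dividing $d$. Since $g^k$ is again regular (a power of a regular element is regular, with eigenvalue a power of the original), and $g^k\in N(H)$ acts on the normal space $\mathbb{C}v_H = H^\perp$-direction by a scalar, that scalar is a root of unity of order dividing $f_H$; combining with regularity (the regular eigenvector of $g^k$ lies off every hyperplane, forcing the scalar to be a primitive $p$-th root of unity when $p\mid$ the eigenvalue order) I would derive $p\mid f_H\mid\kappa(W)$, contradicting $\gcd(d,\kappa(W))=1$. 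Here I must be careful: $g^k$ could act on $H^\perp$ through $C(H)$ nontrivially — but $W_H\cap C(H)=\{1\}$ and $g^k\in N(H)$, so the relevant quotient is indeed $N(H)/C(H)$, and the subtlety is separating the action on $H$ versus $H^\perp$. The regularity of $g^k$ ensures it does not fix $H$ pointwise unless $g^k=1$, so its image in $N(H)/C(H)$ is nontrivial of order $p$, giving $p\mid f_H$.

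For the converse ``$d$ freely regular $\Rightarrow$ $d$ coprime to $\kappa(W)$'': this is the direction flagged in the introduction as the only place needing the Shephard–Todd classification, so I expect the main obstacle here. The strategy is: suppose a prime $p$ divides both $d$ and $\kappa(W)=\lcm\{f_H\}$, so $p\mid f_H$ for some $H$. One wants to produce a power of $g$ lying in $N(H)$, i.e. stabilizing $H$, contradicting freeness. The difficulty is that a priori the regular element $g$ of order $d$ and the hyperplane $H$ witnessing $p\mid f_H$ need not interact — $g$ may move $H$ around freely. The resolution is a case-by-case verification over the Shephard–Todd list (using Proposition \ref{prop:freeregreal} for the real case, where $\kappa(W)=2$ and odd regular numbers are automatically coprime to $2$; the explicit formulas for $\kappa(G(de,e,n))$ together with the known regular numbers of $G(de,e,n)$ for the infinite family; and the table of exceptional groups for the rest), checking in each instance that every odd prime dividing a freely regular $d$ is coprime to $\kappa(W)$. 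I would organize the exceptional cases via the table referenced as \ref{tab:exceptions} and section \ref{sec:tableexc}, and dispatch $G(de,e,n)$ by comparing the list of regular integers (divisors $d$ such that $d\mid$ some degree or codegree in the appropriate sense, Springer) against $\lcm(2,de)$ and checking the freeness condition $\gcd(d, e\cdot?)$ directly from the combinatorial model of hyperplanes of $G(de,e,n)$ as $\{x_i=\zeta x_j\}$ and $\{x_i=0\}$. The one genuinely classification-dependent assertion is that no freely regular $d$ shares a factor with $\kappa(W)$ in the exceptional cases, which I would present as a finite check.
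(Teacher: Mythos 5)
Your plan follows the paper's proof in both directions: the forward implication is exactly Kostant's eigenvector argument (a nontrivial power $w^k$ of the regular element stabilizing $H$ acts on the line $H^{\perp}$ by a scalar of order dividing $f_H$, and that scalar must equal $\zeta^k$ because the regular eigenvector $v$ lies off $H$, so coprimality of $d$ and $f_H\mid\kappa(W)$ forces $\zeta^k=1$, contradicting Steinberg), and the converse is handled in the paper exactly as you propose, via the classification (Proposition \ref{prop:freeregreal} for real groups, Lemmas \ref{lem:freeregGdeen} and \ref{lem:freeregGeen} for the series $G(de,e,n)$, and a computer check on the exceptional groups). One slip to correct in your write-up: nontriviality of the image of $g^k$ in $N(H)/C(H)$ means $g^k\notin C(H)$, i.e.\ $g^k$ does not fix $H^{\perp}$ pointwise --- not that it fails to fix $H$ pointwise (that would be $g^k\notin W_H$); the paper avoids this issue by passing to $w^{kf_H}\in C(H)$ and pairing the regular eigenvector against a spanning vector of $H^{\perp}$ with the $W$-invariant hermitian product to get $\zeta^{kf_H}=1$ directly.
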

\begin{proof}
Let $w \in W$ be regular of order $d$, and $v \in X$ such that $w.v = \zeta v$,
$\zeta$ a primitive $d$-th root of $1$. If we assume by contradiction that
$w$ is not freely regular, then
there exists $H \in \mathcal{A}$, $k \in \{1,\dots, d-1 \}$ such that
$w^k(H) = H$. Then $w^{k f_H} \in C(H)$. 
We now borrow an argument which is apparently due to Kostant (see \cite{SPRINGER}, proof of proposition 4.10) for the case of Coxeter groups.
Let $v_2$ be a spanning vector
for $H^{\perp}$, and $(\ | \ )$ a $W$-invariant hermitian scalar product on $V$.
Then $(v|v_2) = (w^{k f_H} v | w^{k f_H} v_2) = (\zeta^{k f_H} v |v_2)$.
Since $v \in X$ we have $v \not\in H$, that is $(v|v_2) \neq 0$. This
implies $\zeta^{k f_H} = 1$.
 But since $d$ is coprime to $\kappa(W)$,
it is in particular coprime to $f_H$, hence $\zeta^{k f_H} = 1 \Rightarrow \zeta^k = 1$. This means $w^k. v = v$. But since $w^k \neq 1$ and $v \in X$, this
contradicts Steinberg's theorem and concludes the direct part of the proof.

We have no direct proof of the converse implication, but we can check it
by using the classification. We check it case by case on the exceptional
groups (by computer means), see the table below.
For the general series, we use an explicit description of the regular elements,
that we postpone to
section \ref{sect:freeregsgdeen} below. The conclusion of the theorem is then
consequence of the case of real reflection groups (proposition \ref{prop:freeregreal})
together with lemmas \ref{lem:freeregGdeen} and \ref{lem:freeregGeen}
proved there.
\end{proof}

\subsection{Freely regular elements in types $G(de,e,n)$}
\label{sect:freeregsgdeen}

We first consider the case $W =G(d,1,n)$, $n \geq 2$ and $d > 1$.
Let $x_0=(1,\zeta_{dn}^{-1},\zeta_{dn}^{-2},\dots,\zeta_{dn}^{-(n-1)})$,
were $\zeta_k = \exp(2  \pi\ii/k)$. Since $n \geq 2$ we have $x_0 \in X$.
Let now $g_0 \in \GL(V)$ be defined by $g_0.e_i = e_{i+1}$
if $i < n$, $g_0.e_n = \zeta_d e_1$. It is clear that $g_0 \in W$,
and that $g_0.x_0 = \zeta_{dn} x_0$. Therefore $g_0$ is a regular
element of $W$, associated to the primitive $dn$-th root of unity $\zeta_{dn}$.
Now, the degrees of $W$ as a reflection group are $d,2d,3d,\dots,nd$,
while its codegrees are $0,d,\dots,(n-1)d$ (see e.g. \cite{LEHRTAY} table D.5). By a well-known
(and useful) criterion (see \cite{LEHRTAY}, 11.28), the regular numbers
being the numbers which divide an equal number of degrees and codegrees,
it follows
that the regular numbers for $W$ are the divisors of $nd$. Therefore,
the regular elements of $W$ are the conjugates of the powers of $g_0$.
We write $g_0 = \delta \sigma$ where $\sigma  = (1\ 2 \ \dots \ n) \in \mathfrak{S}_n$ and $\delta = \diag(\zeta_d,1,\dots,1)$. We have $\sigma = p_{\Sigma}(g_0)$
where $p_{\Sigma} : G(d,1,n) \to \mathfrak{S}_n$ is the natural projection.
Now note that the orthogonal of a reflecting hyperplane is spanned by
a vector having at most 2 non-zero entries. Therefore, $g_0^k$ may stabilize
a reflecting hyperplane only if $\sigma^k$ has a 1-cycle or a 2-cycle in its
decomposition into disjoint cycles. We write $k = nq + k_0$, $k_0 <n$, $q \geq 0$.
Since $\sigma^k = \sigma^{k_0}$, this is possible only if $k_0 = 0$ or $k_0 = n/2$.
Note that $g_0^n = \zeta_d \Id \in Z(W)$ stabilizes every hyperplane and
therefore so do the $g_0^k$ when $k_0=0$. Similarly, if $k_0 = n/2$,
then $g_0^k = \zeta_d^q g_0^{n/2}$ stabilizes the same hyperplanes as $g_0^{n/2}$. But since $\sigma^{n/2}$ is a disjoint product of transpositions, these hyperplanes should be of the form $z_{r+(n/2)} = \alpha z_r$
for some $\alpha,r$. It is straightforward to check that such an hyperplane is stable
if and only if $\alpha^2 = \zeta_d^{-1}$ and there exists such an $\alpha$ exactly
when $d$ is odd.

Let now $n \geq 2$, $e > 1$ and $d>1$. We set $W = G(de,e,n)$. The
reflecting hyperplanes of $W$ are the same as for $G(de,1,n)$,
and therefore the (freely acting) regular elements for $G(de,e,n)$
are exactly the (freely acting) regular elements for $G(de,1,n)$ which belong
to $G(de,e,n)$. Recall that $G(de,e,n)$ is the subgroup of the $g \in G(de,1,n)$
such that $\Pi(g) \in \mu_d$, where $\Pi : G(de,e,n) \to \mu_{de}$
is the morphism obtained by multiplying together all the non-zero
entries. So it is sufficient to check when $g_0^k \in G(de,e,n)$.
Since $\Pi(g_0^k) = \Pi(g_0)^k = (\zeta_{de})^k$,
we have $g_0^k \in G(de,e,n)$ iff $(\zeta_{de})^{kd} = 1$ iff $\zeta_e^k = 1$
iff $e$ divides $k$.

From this study we get the following characterization of freely regular numbers.

\begin{lemma}  \label{lem:freeregGdeen} Let $W = G(de,e,n)$, with $d >1$ and $n \geq 2$.
\begin{enumerate}
\item The regular numbers of $W$ are the divisors $m$ of $dn$.
\item The freely regular numbers of $W$ are the regular numbers coprime to $lcm(2,de)$.
\end{enumerate}
\end{lemma}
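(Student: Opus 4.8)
The plan is to deduce the lemma from the facts just established about $G(de,1,n)$ — that its regular elements are the conjugates of the powers of $g_0$, and that $g_0^{j}$ fixes a reflecting hyperplane if and only if $n\mid j$ (when $g_0^{j}$ is scalar) or $n$ is even, $de$ is odd and $j\equiv n/2\pmod n$ — together with one structural remark: $W=G(de,e,n)$ is normal in $G(de,1,n)$ (being the kernel of the homomorphism $G(de,1,n)\to\mu_e$ obtained from $\Pi$ by reduction modulo $\mu_d$) and has the same reflecting hyperplanes, hence the same set $X$ of regular vectors.

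For part (1): since the regular elements of $G(de,1,n)$ are the conjugates of the powers of $g_0$ and $W$ is normal, the regular elements of $W$ are exactly the conjugates of those $g_0^{k}$ lying in $W$, i.e.\ those with $e\mid k$. From $(g_0^{e})^{n}=(g_0^{n})^{e}=\zeta_{de}^{e}\Id=\zeta_{d}\Id$ we get that $g_0^{e}$ has order exactly $dn$, so each $g_0^{e\ell}$ has order $dn/\gcd(dn,\ell)$, a divisor of $dn$, and it realizes — via the regular eigenvector $x_0$, on which it acts by $\zeta_{dn}^{\ell}$ — the regular number $dn/\gcd(dn,\ell)$. As $\ell$ varies, these are exactly the divisors of $dn$; hence the regular numbers of $W$ are precisely the divisors of $dn$. (When $e=1$ this also follows from the degree/codegree computation recalled above.)

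For part (2): by Springer's theory, and because conjugation by $W$ permutes $\mathcal{A}$, a regular number $m$ is freely regular if and only if $\langle g_0^{k}\rangle$ acts freely on $\mathcal{A}$, where $k=e\,dn/m$ (so that $g_0^{k}$ has order $m$). I would then run the arithmetic against the stabilization criterion. For $0<j<m$ one has $n\mid jk$ exactly when $m/\gcd(m,de)\mid j$, which occurs for some such $j$ iff $\gcd(m,de)>1$; and if $\gcd(m,de)=1$ — so $\gcd(m,d)=1$, hence $m\mid n$ — while $m$ is even — so $de$ is odd and $n$ is even — then $j=m/2$ gives $jk=den/2\equiv n/2\pmod n$, so $g_0^{jk}$ fixes one of the transposition hyperplanes. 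Thus $\langle g_0^{k}\rangle$ is not free whenever $\gcd(m,de)>1$ or $m$ is even. Conversely, if $m$ is odd and coprime to $de$, then for $0<j<m$ we have $n\nmid jk$ (else $m\mid j$) and, in the only pertinent case $n$ even and $de$ odd, also $jk\not\equiv n/2\pmod n$ (else $n/2\mid jk$, i.e.\ $m\mid 2jde$, i.e.\ $m\mid j$ since $m$ is odd and coprime to $de$); so $\langle g_0^{k}\rangle$ acts freely. Hence $m$ is freely regular if and only if it is a regular number that is odd and coprime to $de$, i.e.\ a regular number coprime to $\lcm(2,de)$.

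The only step needing real care is this last arithmetic. The ``central'' obstruction ($g_0^{j}$ scalar) produces the condition $\gcd(m,de)=1$, while the ``transposition'' obstruction — available only when $de$ is odd, since $\alpha^{2}=\zeta_{de}^{-1}$ has a $de$-th root of unity solution exactly then — is what replaces $de$ by $\lcm(2,de)$; the two elementary implications ``$\gcd(m,de)=1\Rightarrow m\mid n$'' and ``$m$ even $\Rightarrow$ $n$ even, $de$ odd'' are what make these two obstructions jointly exhaustive. No geometric input beyond the preceding paragraphs is required.
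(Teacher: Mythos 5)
Your proof is correct and follows essentially the same route as the paper: both reduce to the stabilization criterion for the powers of $g_0$ in $G(de,1,n)$ established in the preceding paragraphs, restrict to the powers lying in $G(de,e,n)$ via the condition $e\mid k$, and then carry out the same divisibility analysis. The only difference is cosmetic --- the paper organizes the case analysis by the parity of $de$ and $n$ and phrases the freeness conditions as the triviality of $\langle g\rangle\cap\langle g_0^{n}\rangle$ (resp.\ $\langle g_0^{n/2}\rangle$), whereas you work directly with the exponents $jk$ modulo $n$ and $n/2$.
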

\begin{proof}
We already proved (1), as the condition $e$ divides $k$ above means that $e$ divides $den/m$,
that is $m$ divides $dn$. For (2) let $g \in W$ be regular of order $m$ with $m$ freely regular.
Up to conjugating $g$, we can assume that $g \in \langle g_0 \rangle \simeq \Z/(den\Z)$.
Since $\langle g_0^n \rangle$ has order $de$ we have $\langle g \rangle \cap
\langle g_0^n \rangle = \{ 1 \}$ if and only if $m$ is coprime to $de$. When $de$ is even,
we have $de = \lcm(2,de)$ and $\langle g \rangle$ acts freely if and only if
$\langle g \rangle \cap
\langle g_0^n \rangle = \{ 1 \}$, so this proves (2) in this case. When $de$ is odd,
we have $\lcm(2,de) = 2de$ and
we proved that $\langle g \rangle$ acts freely if and only if,
\begin{itemize}
\item $\langle g \rangle \cap \langle g_0^n \rangle = \{ 1 \}$ when $n$ is odd
\item $\langle g \rangle \cap \langle g_0^{\frac{n}{2}} \rangle = \{ 1 \}$ when $n$ is even.
\end{itemize}
The first condition is equivalent to saying that $m$ is coprime to $de$. Since $m$ divides
$den$ it implies that $m$ divides $n$ and therefore $m$ is odd, whence $m$ is coprime to $\lcm(2,de)=2de$.
The second condition is equivalent to saying that $m$ is coprime to the order $2de$ of
$\langle g_0^{\frac{n}{2}} \rangle$. This proves the claim.
\end{proof}

We now consider the case $W = G(e,e,n)$, $n \geq 3$, $e > 1$. The degrees of $W$
are $e,2e,\dots,(n-1)e, n$ and its codegrees are $0,e,\dots,(n-2)e, (n-1)e - n$.
By the preceedingly mentioned criterion, a regular number is then either
a divisor of $(n-1)e$ or a divisor of $n$. It is thus sufficient to describe
a regular element of order $(n-1)e$ and one of order $n$. We get the first one
by embedding $G(e,1,n-1)$ into $G(e,e,n)$ through $g \mapsto (g, \Pi(g)^{-1})$.
Indeed, letting $\tilde{x}_0 = (x_0,0)$ and $\tilde{g}_0 = g_0 \oplus \zeta_e^{-1} \Id_1$,
where $x_0,g_0$ are as defined above for $W = G(e,1,n-1)$, we get $\tilde{g}_0.
\tilde{x}_0 = \zeta_{(n-1)e} \tilde{x}_0$ and $\tilde{x}_0 \in X$. Therefore
$\tilde{g}_0$ is a regular element of order $(n-1)e$, associated to $\zeta_{(n-1)e}$.

If $n$ divides $e$, the divisors of $n$ are divisors of $(n-1)e$ and we are done.
We now assume otherwise.
We first describe a regular
element of order $n$.

Let us write $n = \delta n'$, $e = \delta e'$, with $\delta = \gcd(n,e)$ (hence
$\gcd(e',n')=1$). We set $x_2 = (1, \zeta_n^{-1},\zeta_n^{-2},\dots,
\zeta_n^{-(n'-1)}) \in \C^{n'}$, and $g_2 \in \GL_{n'}(\C)$ defined
by $g_2.e_i = e_{i+1}$ for $i < n'$, $g_2.e_{n'} = \zeta_{\delta} e_{n'}$,
where $e_1,\dots,e_{n'}$ denotes the canonical basis of $\C^{n'}$. We have
$g_2.x_2 = \zeta_n x_2$. We choose now $\delta$ positive reals
$0<\la_1 < \la_2 < \dots < \la_{\delta}$, and let
$x_1 = (\la_1 x_2) \oplus (\la_2 x_2) \oplus \dots \oplus (\la_{\delta} x_2) \in \C^n$.
We have $x_1 \not\in X$ if and only if $\zeta_n^k \in \mu_e$ for some $1 \leq k < n'$. But this means $1 = \zeta_n^{ke} = \zeta_{n'}^{ke'}$
that is $n'$ divides $k e'$. Since $n'$ and $e'$ are coprime this implies $n'$ divides
$k$, a contradiction. Therefore $x_1 \in X$. Setting $g_1 = g_2 \oplus g_2 \oplus
\dots \oplus g_2 \in \GL_n(\C)$ we have $g_1.x_1 = \zeta_n x_1$.
Since $g_1 \in G(e,1,n)$ and $\Pi(g_1) = (\Pi(g_2))^{n/n'} = (\Pi(g_2))^{\delta} = \zeta_{\delta}^{\delta} = 1$, we have $g_1 \in G(e,e,n)$.
Therefore $g_1$ is a regular element of $W$, attached to $\zeta_n$.

Now $\tilde{g}_0^{n-1} = g_0^{n-1}\oplus \zeta_e^{-(n-1)} \Id_1
= \zeta_e \Id_{n-1} \oplus \zeta_e^{-(n-1)} \Id_1$ stabilizes the hyperplane $z_1 = z_2$, and no hyperplane is stabilized by $\tilde{g}_0^k$ if $(n-1)$ does not
divise $k$, unless $n$ and $e$ are odd, in which case the condition is that $k$ is not
a multiple of $(n-1)/2$.
Likewise, $g_1^k$ stabilizes a reflecting hyperplane if and
only if 
\begin{itemize}
\item $k$ is a multiple of $n'$, or
\item $n'$ is even, $k$ is a multiple of $n'/2$ and $\delta$ is odd.
\end{itemize}
This yields a determination of the freely refular numbers in this case:

\begin{lemma} \label{lem:freeregGeen} Let $W = G(e,e,n)$, with $e >1$ and $n \geq 3$.
\begin{enumerate}
\item The regular numbers of $W$ are the divisors of $n$ or $(n-1)e$.
\item The freely regular numbers of $W$ are the regular numbers coprime to $lcm(2,e)$.
\end{enumerate}
\end{lemma}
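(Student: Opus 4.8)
The statement to prove is Lemma \ref{lem:freeregGeen}: for $W = G(e,e,n)$ with $e>1$ and $n \geq 3$, the regular numbers are the divisors of $n$ or of $(n-1)e$, and the freely regular ones among them are exactly those coprime to $\lcm(2,e)$. Part (1) is essentially already recorded in the text preceding the lemma: the degrees of $W$ are $e, 2e, \dots, (n-1)e, n$ and the codegrees are $0, e, \dots, (n-2)e, (n-1)e-n$, so by the Lehrer--Taylor criterion (\cite{LEHRTAY}, 11.28) a number is regular iff it divides as many degrees as codegrees; one checks that this forces it to divide $(n-1)e$ or to divide $n$. So the bulk of the work is part (2), and the plan is to mimic the proof of Lemma \ref{lem:freeregGdeen}: fix a regular element $g$ of order $m$, conjugate it into one of the two explicit cyclic subgroups constructed above, and read off the freeness condition from the explicit stabilizer analysis already carried out for $\tilde{g}_0$ and $g_1$.

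\textbf{The two cases.} First I would observe that every regular element of $W$ is, up to conjugacy, a power of $\tilde{g}_0$ (the regular element of order $(n-1)e$ obtained by embedding $G(e,1,n-1)$) when its order divides $(n-1)e$, and a power of $g_1$ (the regular element of order $n$) when its order divides $n$; if $n \mid e$ the second family is absorbed into the first, so one may assume $n \nmid e$. Then I would invoke the two displayed stabilizer computations in the text. For powers of $\tilde{g}_0$: $\tilde{g}_0^k$ stabilizes a reflecting hyperplane iff $(n-1) \mid k$, unless $n$ and $e$ are both odd, in which case the condition is $(n-1)/2 \mid k$. Since $\tilde{g}_0$ has order $(n-1)e$, freeness of $\langle \tilde{g}_0^{(n-1)e/m}\rangle$ translates, when $(n-1)e$ is even, into $m$ being coprime to $(n-1)e = \lcm(2,e)\cdot(\text{something})$ — more precisely into $m$ coprime to $e$ and $m$ odd, i.e.\ $m$ coprime to $\lcm(2,e)$; and when $(n-1)e$ is odd (so $e$ and $n-1$ both odd, hence $n$ even — wait, here one must be careful: $\tilde g_0$ lives in the $n$-variable space but the relevant parity is that of $(n-1)e$), the exceptional ``$(n-1)/2$'' condition forces $m$ coprime to $2(n-1)e = \lcm(2,e)\cdot 2(n-1)/\gcd(\ldots)$, and since $m \mid (n-1)e$ this again reduces to $m$ coprime to $\lcm(2,e)$. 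For powers of $g_1$: $g_1^k$ stabilizes a hyperplane iff $n' \mid k$, or $n'$ even and $n'/2 \mid k$ and $\delta$ odd, where $n = \delta n'$, $e = \delta e'$, $\delta = \gcd(n,e)$. Since $g_1$ has order $n = \delta n'$, one unwinds freeness of $\langle g_1^{n/m}\rangle$ into a coprimality condition, checks it is equivalent to $m$ coprime to $\lcm(2,e)$ using $\gcd(e',n')=1$, and handles the parity subtlety exactly as in the $\tilde g_0$ case.

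\textbf{The arithmetic glue.} The one genuinely fiddly point, and the place I expect to spend the most care, is verifying that the two a priori different coprimality conditions — ``$m$ coprime to the relevant order of the cyclic group of hyperplane-stabilizing powers'' in each of the two families — both collapse to the uniform statement ``$m$ coprime to $\lcm(2,e)$''. This requires juggling the divisibility constraints ($m \mid (n-1)e$ in the first case, $m \mid n$ in the second), the parities of $e$, $n$, $n-1$, $\delta$, and the coprimality $\gcd(e',n')=1$; in particular one must check that when $m \mid n$ and $m$ is coprime to $e$ then automatically $m$ is coprime to $\lcm(2,e)$ precisely because such an $m$ must be odd (this is the same phenomenon exploited in the proof of Lemma \ref{lem:freeregGdeen}: $m \mid n$ and $m$ coprime to $e$ forces, via $\delta = \gcd$, that $m \mid n'$, hence $m$ odd when the exceptional even-$n'$ case is the operative one). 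Assembling these elementary but numerous sub-cases into a clean dichotomy ``$e$ even / $e$ odd'' mirroring the proof of the previous lemma is the main obstacle; there is no conceptual difficulty, only bookkeeping, and the explicit stabilizer descriptions supplied just before the lemma do all the geometric heavy lifting. Once part (2) is in hand, the theorem on freely regular numbers being coprime to $\kappa(W)$ follows for this family since $\kappa(G(e,e,n)) = \lcm(2,e)$ for $n \geq 3$ by the formulas recalled in the text.
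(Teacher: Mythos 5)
Your plan follows the paper's proof essentially verbatim: part (1) is read off from the degrees/codegrees criterion, and for part (2) one conjugates $g$ into $\langle\tilde{g}_0\rangle$ or $\langle g_1\rangle$, handles the first family by the same argument as in Lemma \ref{lem:freeregGdeen}, and for the second family translates freeness of $\langle g\rangle$ into the condition $\langle g\rangle\cap\langle g_1^{n'}\rangle=\{1\}$ (resp.\ $\langle g\rangle\cap\langle g_1^{n'/2}\rangle=\{1\}$ when $\delta$ is odd and $n'$ is even), which is exactly what the paper does.

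One warning about your ``arithmetic glue'': the parenthetical claim that $m\mid n$ together with $\gcd(m,e)=1$ automatically forces $m$ to be odd is false --- take $n=4$, $e=3$, $m=4$. In the case $\delta$ odd and $n'$ even, the oddness of a freely regular $m$ is \emph{not} an arithmetic consequence of coprimality to $e$; it comes from the extra geometric condition $\langle g\rangle\cap\langle g_1^{n'/2}\rangle=\{1\}$, which yields coprimality to $2\delta$ rather than to $\delta$. (When $\delta$ is even, $m$ coprime to $\delta$ is indeed automatically odd; when $\delta$ and $n'$ are both odd, $n$ itself is odd so there is nothing to check.) With that correction the case analysis closes exactly as in the paper.
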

\begin{proof}
We already proved (1).
For (2) let $g \in W$ be regular of order $m$ with $m$ freely regular.
Up to conjugating $g$, we can assume that $g \in \langle \tilde{g}_0 \rangle$ 
or $g \in \langle g_1 \rangle$. In the first case the proof that the order of $g$
is freely regular if and only if it is coprime to $\lcm(2,e)$ is the same
as in the previous lemma. We assume now $g \in \langle g_1 \rangle \simeq \Z/n\Z$
and write as before $n = \delta n'$, $e = \delta e'$, with $\delta = \gcd(n,e)$.
When $\delta$ is even,
$\langle g \rangle$ 
acts freely if and only if $\langle g \rangle \cap
\langle g_1^{n'} \rangle = \{ 1 \}$. This condition means that $m$ is coprime to $\delta = \gcd(n,e)$.
Since $m$ divides $n$ and $\delta$ is even this is equivalent to saying that $m$ is coprime to $2$
and $e$, which proves the claim.

When $\delta$ is odd and $n'$ is odd, again $\langle g \rangle$ 
acts freely if and only if $\langle g \rangle \cap
\langle g_1^{n'} \rangle = \{ 1 \}$ and this condition still means that $m$ is coprime to $\delta = \gcd(n,e)$.
Since $m$ divides $n$ this is equivalent to saying that $m$ is coprime to $e$. But in this case, since $m$ divides $n'$ it has to be odd, and therefore this is equivalent to saying that $m$ is coprime to $\lcm(2,e)$.

Finally, assume that $\delta$ is odd and $n'$ is even. Then $\langle g \rangle$ 
acts freely if and only if $\langle g \rangle \cap
\langle g_1^{n'/2} \rangle = \{ 1 \}$. This condition means that $m$ is coprime to $2\delta = 2\gcd(n,e)$.
This is equivalent to saying that $m$ is coprime to $2$ and to $\delta$, and therefore
to $2$ and $e$. This concludes the proof.

\end{proof}

Since the groups $G(1,1,n)$ are the symmetric groups
and the groups $G(e,e,2)$ are the dihedral groups, which are
both real reflection groups, for which we know the freely
regular numbers by proposition \ref{prop:freeregreal}, this concludes our study.

\subsection{Some non-abelian finite subgroups of $B/(P,P)$}

Let $G$ be a finite subgroup of $B/(P,P)$.
Since $P_0$ is torsion-free, the projection map $\pi : B/(P,P) \to \overline{W}$
identifies $G$ with a subgroup of $\overline{W}$. Conversely, if $Q$
is a subgroup of $\overline{W}$, it is the projection of
a finite subgroup of $B/(P,P)$ if and only if the induced short exact
sequence $1 \to P_0 \to \pi^{-1}(Q) \to Q \to 1$ is split. In cohomological
terms, it means that the image of the cohomology class $c \in H^2(\overline{W},P_0)$ inside $H^2(Q,P_0)$ is zero.

\begin{proposition} Assume that $Z(W)=1$, and that $G$ is a finite subgroup
of $W$ acting freely by conjugation on $\mathcal{R}^*$. Then there
is a finite subgroup of $B/(P,P)$ isomorphic to $G$ through $\pi$.
\end{proposition}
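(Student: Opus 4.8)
The plan is to construct an explicit section of the projection $\pi^{-1}(G) \to G$ by building, for each $g \in G$, a lift $\hat g \in B/(P,P)$ in such a way that $g \mapsto \hat g$ is a group homomorphism. Following the proof of Proposition~\ref{prop:freelyimpliesfinite}, the idea is that the freeness of the conjugation action of $G$ on $\mathcal R^*$ (equivalently on $\mathcal A$, via the $W$-equivariant correspondence $\mathcal R^* \leftrightarrow \mathcal A$) is exactly what allows one to split off the ``central'' ambiguity that obstructs lifting. Concretely, since $Z(W)=1$ we have $P_0 = P^{ab} \simeq \Z^N$ by the first Theorem, and the obstruction to splitting $1 \to P^{ab} \to \pi^{-1}(G) \to G \to 1$ lives in $H^2(G,\Z^N)$ with $G$ acting on $\Z^N = \bigoplus_{H \in \mathcal A} \Z c_H$ by the (free!) permutation action on $\mathcal A$.

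The key step is the following cohomological observation: when $G$ acts freely on the basis $\mathcal A$ of $\Z^N$, the $\Z G$-module $\Z^N$ is a \emph{free} $\Z G$-module, namely $\Z^N \simeq (\Z G)^{|\mathcal A/G|}$, one copy of the regular representation $\Z G$ for each $G$-orbit in $\mathcal A$. First I would verify this: picking a set $\mathcal B$ of orbit representatives, the map sending the standard basis element of $\Z G$ (indexed by $g$) in the $H$-th copy to $c_{g(H)}$ is a $\Z G$-isomorphism precisely because the action is free, so distinct pairs $(g,H)$, $H \in \mathcal B$, give distinct hyperplanes $g(H)$. Then one invokes the standard fact that $H^i(G,\Z G) = 0$ for all $i \geq 1$ (the regular module $\Z G$ is cohomologically trivial in positive degrees, being both induced and coinduced from the trivial subgroup), hence $H^2(G,\Z^N) = \bigoplus_{\mathcal B} H^2(G,\Z G) = 0$. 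Therefore the extension splits, and by the dictionary recalled just before the proposition, $G$ is the image under $\pi$ of a finite subgroup of $B/(P,P)$ isomorphic to $G$.

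I expect the main (and really only) obstacle to be checking that $\Z^N$ is genuinely a free $\Z G$-module rather than merely a permutation module on a free $G$-set — but this is precisely the content of the freeness hypothesis on the action on $\mathcal R^*$, so there is nothing deep here; the work is just unwinding the identification $P^{ab} \simeq \Z^{\mathcal A}$ and the compatibility of the conjugation action with the permutation action on $\mathcal A$, both of which are recalled in Section~\ref{sect:mainconstruction}. An alternative, slightly more hands-on route avoiding the language of group cohomology would be to mimic the computation in Proposition~\ref{prop:freelyimpliesfinite} directly: choose for each $g$ a geometric lift and correct it by an element $\sum_H u_H^{(g)} c_H$ of $P^{ab}$, then solve the resulting system of linear equations for the $u_H^{(g)}$ orbit by orbit, using freeness to see that each orbit contributes an independent and consistent block. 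Either way, the conclusion is that the finite subgroup of $\overline W = W$ lifts, which is what is claimed.
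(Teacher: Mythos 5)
Your argument is correct and is essentially identical to the paper's: the paper likewise decomposes $\mathcal{R}^*$ into free transitive $G$-orbits, identifies each $\Z C_k$ with the regular module $\Z G \simeq \Ind_{\{1\}}^G \un$, concludes $H^2(G,\Z\mathcal{A})=0$ by Shapiro's lemma, and deduces that the extension $1 \to P^{ab} \to \pi^{-1}(G) \to G \to 1$ splits. The only cosmetic difference is that you phrase the key point as ``$\Z^N$ is a free $\Z G$-module'' while the paper computes orbit by orbit; these are the same observation.
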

\begin{proof}
Since $Z(W) = 1$, we have $\overline{W} = W$, $P_0 = P/(P,P) = \Z \mathcal{A}$,
and the action of $W$ on $\Z \mathcal{A} = \Z \mathcal{R}^*$ is
defined by the conjugation action on $\mathcal{R}^*$. Since it is $G$-free,
we can decompose $\mathcal{R}^*$ into $G$-orbits $C_1 \sqcup \dots \sqcup C_m$ such that the $G$-action on each $C_k$ is free and transitive.
Then, considering the action by translation of $G$ on $\Z G$, we
have $H^2(G,\Z C_k) \simeq H^2(G,\Z G) \simeq H^2(G, \Ind_{\{1\}}^G \un)
\simeq H^2(\{ 1 \}, \Z) = 0$ and therefore $H^2(G,\Z \mathcal{A}) = 0$.
Therefore the sequence $1 \to P^{ab} \to \pi^{-1}(G) \to G \to 1$
splits and this concludes the proof.
\end{proof}

From this we recover theorem 7 of \cite{GGO}.

\begin{figure}
\begin{tikzpicture}[dba/.style={double,double equal sign distance}]
\draw (0,0) node (1) {$6$};
\draw (1)++(.5,.8660254037) node (2)  {$3$};
\draw (1)++(1,0) node (3)  {$1$};
\draw (2)++(.5,.8660254037) node (4)  {$5$};
\draw (2)++(1,0) node (5) {$2$};
\draw (3)++(1,0) node (6)  {$4$};
\draw (3)++(0,.5773502691) node (7) {$7$};
\draw (7) circle (12pt);
\draw (1)--(2);
\draw (1)--(3);
\draw (1)--(7);
\draw (2)--(7);
\draw (2)--(4);
\draw (3)--(7);
\draw (3)--(6);
\draw (4)--(7);
\draw (4)--(5);
\draw (5)--(6);
\draw (5)--(7);
\draw (6)--(7);
\end{tikzpicture}
\caption{Fano plane.}
\label{fig:fano}
\end{figure}
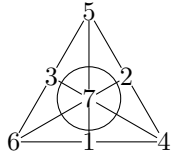

\begin{corollary} (\cite{GGO}, theorem 7) The group $B_7/(P_7,P_7)$ contains a finite non-abelian subgroup of order $21$.
\end{corollary}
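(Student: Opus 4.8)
The plan is to apply the previous proposition to the symmetric group $W = \mathfrak{S}_7 = G(1,1,7)$, for which $Z(W) = 1$, and to exhibit a subgroup $G$ of order $21$ acting freely by conjugation on the set $\mathcal{R}^*$ of transpositions. First I would recall that for $\mathfrak{S}_7$ the reflecting hyperplanes $\mathcal{A}$ (equivalently the distinguished reflections $\mathcal{R}^*$) are the $\binom{7}{2} = 21$ transpositions, so $|\mathcal{A}| = 21$. The conjugation action of $\mathfrak{S}_7$ on this set is the action on $2$-element subsets of $\{1,\dots,7\}$.

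Next I would produce an explicit subgroup $G \cong \Z/7\Z \rtimes \Z/3\Z$ of order $21$ inside $\mathfrak{S}_7$, namely the normalizer-type group generated by the $7$-cycle $\sigma = (1\,2\,3\,4\,5\,6\,7)$ and an order-$3$ element $\tau$ normalizing $\langle \sigma \rangle$; concretely $\tau$ can be taken to act on $\Z/7\Z = \{1,\dots,7\}$ by $i \mapsto 2i \bmod 7$, since $2$ has order $3$ in $(\Z/7\Z)^\times$. This is precisely the group of affine transformations $x \mapsto ax + b$ of the Fano-plane line $\Z/7\Z$ with $a \in \{1,2,4\}$, which is why the Fano plane of Figure~\ref{fig:fano} is the relevant combinatorial picture. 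The key step is then to verify that $G$ acts \emph{freely} on the $21$ pairs: since $|G| = 21 = |\mathcal{A}|$, freeness is equivalent to transitivity, so it suffices to check that $G$ acts transitively on the $2$-subsets of $\{1,\dots,7\}$. Transitivity follows because $\langle\sigma\rangle$ already has two orbits on pairs — the "distance $1$" pairs $\{i,i+1\}$ and the "distance $2$" pairs $\{i,i+2\}$, each of size $7$ — wait, there are three $\langle\sigma\rangle$-orbits (distances $1,2,3$), and the order-$3$ element $\tau$, multiplying indices by $2$, sends distance $1$ to distance $2$ to distance $4 \equiv 3$; hence $\tau$ permutes the three $\langle\sigma\rangle$-orbits cyclically, so $G$ is transitive, and therefore free, on $\mathcal{A}$.

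Having checked that $G$ acts freely by conjugation on $\mathcal{R}^*$, the previous proposition applies directly (with $\overline{W} = W = \mathfrak{S}_7$): there is a finite subgroup of $B_7/(P_7,P_7)$ mapped isomorphically onto $G$ by $\pi$, and since $G$ is non-abelian of order $21$, so is this subgroup. I expect the main obstacle to be purely bookkeeping: correctly identifying the orbits of $\langle\sigma\rangle$ on unordered pairs and confirming that the doubling map $\tau$ merges them into a single $G$-orbit — i.e. the Fano-plane combinatorics — after which the cohomological vanishing $H^2(G, \Z\mathcal{A}) = 0$ is already supplied by the proposition via the identification $\Z\mathcal{A} \simeq \Ind_{\{1\}}^G \un$.
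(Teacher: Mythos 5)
Your proposal is correct and follows the paper's strategy: both apply the preceding proposition to a Frobenius group of order $21$ inside $\mathfrak{S}_7$ (the paper uses the Fano-plane collineations generated by $s=(1,2,3)(4,5,6)$ and $t=(1,3,4,2,5,6,7)$, which is conjugate to your affine group $x\mapsto ax+b$, $a\in\{1,2,4\}$) acting freely on the $21$ transpositions. The only difference is how freeness is verified: the paper checks directly that no element of $G$ commutes with a transposition, whereas you use the equally valid (and arguably cleaner) observation that since $|G|=|\mathcal{A}|=21$, freeness is equivalent to transitivity, which follows because $\tau$ cyclically permutes the three translation orbits of pairs at distances $1,2,3$.
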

\begin{proof} Let $G$ be a subgroup of order 21 of the collineation group of the Fano plane generated by a 3-fold symmetry $s$ fixing a point and a cyclic permutation $t$ of all 7 points, satisfying $st = t^2 s$, for instance $s = (1,2,3)(4,5,6)$ and
$t = (1,3,4,2,5,6,7)$ (see figure \ref{fig:fano}).
Then it is straightforward to check that no element of $G$ can commute with a transposition, so we can apply the proposition to get the result.
\end{proof}

More generally, we have the following result.

\begin{corollary} If $p$ is a prime number with $p \equiv 3 \mod 4$,
then $B_p/(P_p,P_p)$ contains a  Frobenius group of order $p(p-1)/2$,
non-abelian if $p > 3$.
\end{corollary}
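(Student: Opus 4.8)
The plan is to realise the desired Frobenius group as a subgroup $G$ of the symmetric group $\mathfrak{S}_p=\mathfrak{S}(\F_p)$ which acts freely by conjugation on the reflections of $\mathfrak{S}_p$, and then to invoke the preceding proposition on free conjugation actions (which applies here since $Z(\mathfrak{S}_p)=1$ for $p\geq 3$).

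First I would take $G$ to be the unique index-$2$ subgroup of the affine group $\mathrm{AGL}_1(\F_p)=\{x\mapsto ax+b\mid a\in\F_p^\times,\ b\in\F_p\}\subset\mathfrak{S}(\F_p)$, namely $G=\{x\mapsto ax+b\mid a\in(\F_p^\times)^2,\ b\in\F_p\}$, where $(\F_p^\times)^2$ is the subgroup of squares. It has order $p\cdot(p-1)/2$; it is a Frobenius group with kernel the translation subgroup $\F_p$ and complement $(\F_p^\times)^2$ (a non-identity square $a$ acting by $x\mapsto ax$ fixes only $0$); and it is abelian precisely when $(\F_p^\times)^2$ is trivial, i.e.\ when $p=3$, so it is non-abelian for $p>3$.

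The key step is to check that $G$ acts freely by conjugation on the set $\mathcal{R}^*$ of distinguished reflections of $\mathfrak{S}_p$. Since $\mathfrak{S}_p$ is the Coxeter group of type $A_{p-1}$, $\mathcal{R}^*$ is the set of transpositions and the conjugation action is the permutation action on the $2$-element subsets of $\F_p$. Suppose $g\in G$ stabilises a $2$-subset $\{i,j\}$. If $g(i)=i$ and $g(j)=j$, then the affine map $g$ has two fixed points, which forces $g=\mathrm{id}$. If instead $g(i)=j$ and $g(j)=i$, then $g^2$ fixes $i$ and $j$, so $g^2=\mathrm{id}$ by the previous case; as $g\neq\mathrm{id}$, $g$ is an involution, and writing $g:x\mapsto ax+b$ we get $a^2=1$, so (as $p$ is odd and $a=1$ would give $g=\mathrm{id}$) we must have $a=-1$. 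But $-1$ is a square modulo $p$ if and only if $p\equiv 1\pmod 4$, so under the hypothesis $p\equiv 3\pmod 4$ we have $a=-1\notin(\F_p^\times)^2$, hence $g\notin G$---a contradiction. Therefore the only element of $G$ fixing a transposition is the identity, and the conjugation action of $G$ on $\mathcal{R}^*$ is free.

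Finally, since $p\geq 3$ we have $Z(\mathfrak{S}_p)=1$, so the preceding proposition applies and produces a subgroup of $B_p/(P_p,P_p)$ isomorphic through $\pi$ to $G$, a Frobenius group of order $p(p-1)/2$, non-abelian for $p>3$; taking $p=7$ one recovers the subgroup of order $21$ of the previous corollary. The only delicate point is the freeness check in the second case above, and there the hypothesis $p\equiv 3\pmod 4$ is exactly what is needed: it is precisely the statement that $-1$ is a non-square modulo $p$, which is what excludes the affine involutions $x\mapsto b-x$ from belonging to $G$.
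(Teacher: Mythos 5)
Your proof is correct and follows essentially the same route as the paper: the same index-two subgroup of $\mathrm{AGL}_1(\F_p)$ viewed inside $\mathfrak{S}_p$, the same two-case freeness check on transpositions (your detour through ``$g$ is an involution'' in the swap case is a cosmetic variant of the paper's direct computation $(x^{2k}+1)(i-j)=0$), and the same appeal to the preceding proposition on free conjugation actions on $\mathcal{R}^*$.
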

\begin{proof}
Let $x$ denote a generator of $\F_p^{\times} \simeq \Z/(p-1)\Z$. We let
$G$ denote the group of affine transformations of $\F_p$
of the form $z \mapsto x^{2k}z+\beta$, $\beta \in \F_p$. Since $x^2$ has
order $(p-1)/2$, $G$ has order $p(p-1)/2$, and is non-abelian when $p>3$.
Now $G$ acts faithfully on $\F_p$ and can therefore be considered
as a subgroup of $\mathfrak{S}_p$. We now prove that is acts freely
on transpositions. Let us choose $g \in G \setminus \{ 1 \}$,
that is $g : z \mapsto x^{2k} z + \beta$ with either $\beta \neq 0$ or $x^{2k} \neq 1$.
If $g$ fixes a transposition, there exists $i \neq j$ such that $g(\{i,j \}) = \{i,j \}$.
If $g(i) =i$ then $(x^2-1) i = \beta$ hence $i = \beta (x^2-1)^{-1}\beta$,
and similarly $j = \beta (x^2-1)^{-1}\beta = i$, contradicting $i \neq j$
(alternatively : this would contradict the fact that $G$ is a Frobenius group !).
If $g(i) = j$ and $g(j) = i$, then we get $(x^{2k}+1)(i-j) = 0$. Since $i-j \neq 0$
this implies $x^{2k} = -1$, hence $-1$ is a square inside $\F_p$.
This is not possible when $p \equiv 3 \mod 4$, and this concludes the proof.

\end{proof}

\subsection{Table for exceptional groups}

\label{sec:tableexc}

In table \ref{tab:exceptions} we gather the datas on the exceptional
complex reflection groups which are relevant for our studies.  The column $S$ indicates the 2-Sylow subgroup of $W/Z = W/Z(W)$.
Here $D_n$ is the dihedral group of order $n$, $Q_8$ is the quaternion
group of order $8$, and $\Z_n = \Z/n\Z$. In our description of $W/Z(W)$ we have taken care of
choosing the description where the 2-Sylow subgroup is the most apparent, when possible.
The gap from the family of all groups of a certain rank to the next one is realized by a
horizontal double line separating them.
The column `regular numbers' contains the regular numbers for $W$ but $1$ (which is regular
for every group), where
the freely regular numbers have been circled. Recall that the list of regular numbers
are characterized among all positive integer numbers by the fact that they divide exactly the same number of degrees and codegrees for $W$.
Therefore, this list can be deduced from the tables for degrees and codegrees, such as the one in
\cite{LEHRTAY}, table D.3, p. 275.

\begin{table}
$$
\begin{array}{|c|r|c|r|c|r|c|c|}
\hline
W & |\mathcal{A}| & W/Z & |W/Z| & |Z| & \mathrm{regular\ numbers} & \kappa(W) & S \\
\hline
\hline
G_{4} &4& \mathfrak{A}_4 & 12 & 2 &   2, 3, 4, 6   &6 & \Z_2^2 \\ 
\hline G_{5} &8& \mathfrak{A}_4& 12 & 6 &  2, 3, 4, 6, 12  &  6 & \Z_2^2\\
 \hline G_{6} &10 &\mathfrak{A}_4 & 12 & 4 &   2, 3, 4, 6, 12  & 12 &\Z_2^2\\ 
\hline G_{7} &14 &\mathfrak{A}_4 & 12 & 12 &   2, 3, 4, 6, 12  & 12 & \Z_2^2\\
 \hline G_{8} &6 &\mathfrak{S}_4  & 24 & 4 &  2, \circled{3}, 4, 6, 8, 12 & 4 &D_8 \\ 
 \hline G_{9} & 18  &\mathfrak{S}_4 & 24 & 8 &  2, \circled{3}, 4, 6, 8, 12, 24 &8 & D_8\\
  \hline G_{10} & 14 &\mathfrak{S}_4 & 24 & 12 &   2, 3, 4, 6, 8, 12, 24  &12 & D_8\\
   \hline G_{11} & 26 &\mathfrak{S}_4 & 24 & 24 &   2, 3, 4, 6, 8, 12, 24  & 24 & D_8\\ 
   \hline G_{12} &12 &\mathfrak{S}_4 & 24 & 2 &  2, \circled{3}, 4, 6, 8 & 2 & D_8\\ 
   \hline G_{13} & 18 &\mathfrak{S}_4 & 24 & 4 &   2, \circled{3}, 4, 6, 12 & 8& D_8\\
    \hline G_{14} &20 &\mathfrak{S}_4 & 24 & 6 &   2, 3, 4, 6, 8, 12, 24  & 6 & D_8\\
     \hline G_{15} &26 &\mathfrak{S}_4 & 24 & 12 &    2, 3, 4, 6, 12  &24 & D_8\\
      \hline G_{16} &12 &\mathfrak{A}_5  & 60 & 10 &  2, \circled{3}, 4, 5, 6, 10, 15, 20, 30 & 10 & \Z_2^2\\
  \hline G_{17} & 42 &\mathfrak{A}_5 & 60 & 20 &  2, \circled{3}, 4, 5, 6, 10, 12, 15, 20, 30, 60 & 20 & \Z_2^2\\ 
  \hline G_{18} & 32 &\mathfrak{A}_5 & 60 & 30 &   2, 3, 4, 5, 6, 10, 12, 15, 20, 30, 60  &30 & \Z_2^2\\
   \hline G_{19} & 62 & \mathfrak{A}_5& 60 & 60 &   2, 3, 4, 5, 6, 10, 12, 15, 20, 30, 60 & 60 & \Z_2^2\\ 
   \hline G_{20} & 20 &\mathfrak{A}_5 & 60 & 6 &  2, 3, 4, \circled{5}, 6, 10, 12, 15, 30  & 6 &  \Z_2^2\\
    \hline G_{21} & 50 &\mathfrak{A}_5 & 60 & 12 &  2, 3, 4, \circled{5}, 6, 10, 12, 15, 20, 30, 60 & 12 & \Z_2^2\\
     \hline G_{22} & 30 &\mathfrak{A}_5 & 60 & 4 &  2, \circled{3}, 4, \circled{5}, 6, 10, 12, 20  & 4 & \Z_2^2\\
      \hline \hline
G_{23} & 15 &\mathfrak{A}_5 & 60 & 2 &  2, \circled{3}, \circled{5}, 6, 10  & 2 & \Z_2^2\\ \hline
 G_{24} & 21 & \SL_3(\F_2) & 168 & 2 &  2, \circled{3}, 6, \circled{7}, 14& 2 & D_8 \\ \hline 
G_{25} & 12 & \SU_3(\F_2) & 216 & 3 &   2, 3, 4, 6, 9, 12   & 6 & Q_8 \\ \hline 
 G_{26} & 21 & \SU_3(\F_2) & 216 & 6 &    2, 3, 6, 9, 18  &6 & Q_8 \\ \hline 
 G_{27} & 45 &\mathfrak{A}_6 = \Sp_4(\F_2)' & 360 & 6 &  2, 3, \circled{5}, 6, 10, 15, 30& 6 & D_8 \\ \hline \hline 
 G_{28} & 24 &2^4 \rtimes (\mathfrak{S}_3)^2 & 576 & 2 &  2, \circled{3}, 4, 6, 8, 12 & 2 & \\ \hline 
 G_{29} & 40 & 2^4 \rtimes \mathfrak{S}_5 & 1920 
& 4 &  2, 4, \circled{5}, 10, 20 & 4 & \\ \hline 
G_{30} & 60 & \mathfrak{A}_5 \wr 2 & 7200 & 2 &   2, \circled{3}, 4, \circled{5}, 6, 10, 12, \circled{15}, 20, 30 & 2 & \\ \hline 
G_{31} & 60 & 2^4.\Sp_4(\F_2)& 11520 & 4 &  2, \circled{3}, 4, \circled{5}, 6, 8, 10, 12, 20, 24  & 4 & \\ \hline 
G_{32} & 40 & \SU_4(\F_2) & 25920 & 6 &  2, 3, 4, \circled{5}, 6, 8, 10, 12, 15, 24, 30  & 6 & \\ \hline \hline 
G_{33} & 45 & \SU_4(\F_2) & 25920 & 2 &  2, 3, \circled{5}, 6, 9, 10, 18&6 & \\ \hline \hline 
G_{34} & 126 & \PSU_4(\F_3) \rtimes \Z_2 & 6531840 & 6 &  2, 3, 6, \circled{7}, 14, 21, 42 & 6 & \\ \hline 
G_{35} & 36 & O_6^-(\F_2) & 51840 & 1 &  2, \circled{3}, 4, 6, 8, \circled{9}, 12& 2 &\\ \hline\hline
 G_{36} &63 & \SO_7(\F_2) & 1451520 & 2 & 2, \circled{3}, 6, \circled{7}, \circled{9}, 14, 18  & 2 & \\ \hline\hline 
 G_{37} & 120 &\SO_8^+(\F_2) & 348364800 & 2 &
  2, \circled{3}, 4, \circled{5}, 6, 8, 10&2 & \\
  & & & & &  12, \circled{15}, 20, 24, 30  & & \\ \hline
 \end{array}
 $$
 
 \caption{Table for exceptional groups.}
 \label{tab:exceptions}
 \end{table}

\section{K\"ahler manifolds}
\label{sect:kahler}

We notice in this section that our construction sometimes provides K\"ahler
manifolds. More precisely, letting $S$ denote the Sylow 2-subgroup of $W/Z(W)$,
it happens that the corresponding $N$-manifold can be endowed with a flat K\"ahler
structure which, by \cite{JOHNSONREES}, is equivalent to saying that
the corresponding Bieberbach group is a discrete cocompact torsion-free
subgroup of $U_{\frac{N}{2}} \ltimes \C^{N/2}$. As mentioned above, to
check this it is enough to compute the character table of the Sylow subgroup and
the character of its permutation action on $\mathcal{A}$. We get that this
phenomenon happens exactly, as far the exceptional groups are
concerned, for $G_5$, $G_6$, $G_9$, $G_{13}$, $G_{17}$, $G_{18}$, $G_{21}$, $G_{25}$, yielding K\"ahler manifolds of (real) dimension $8,10,18,18,42,32,50$ and $12$, respectively, with the holonomy group
given by the table.
Of course, by considering smaller 2-groups, one might
obtain K\"ahler manifolds from other groups as well. For instance, it can be checked that each
subgroup of order $2$ of $W/Z(W)$ for $W$ of types $G_4$, $G_7$, $G_{16}$ provides a K\"ahler manifold, of
complex dimensions $2$, $7$ and $6$, respectively. It is also the case in types $G_8$, $G_{12}$ for \emph{some} of the subgroups of order $2$.

Moreover, we checked that the flat manifold of real dimension $10$ and holonomy group
$\mathfrak{A}_4$ built in corollary \ref{cor:G4} from the whole complex reflection group $G_6$,
is a K\"ahler manifold. This is not the case for the one afforded by $G_4$, since it is not even
the case for the manifold afforded by its 2-Sylow subgroup, as noticed above.

We now describe, as an illustrative example, a kind of K\"ahler manifold arising systematically. Let $m \geq 3$ be an odd
integer and let $W$ be the dihedral group of order $4m$. Then $Z(W) = \Z_2$,
and $|\mathcal{A}| = 2m$. Let $\overline{W} = W/Z$. As $\Q \overline{W}$-modules,
$P_0 \otimes_{\Z} \Q \simeq P^{ab}\otimes_{\Z} \Q \simeq \Q \mathcal{A}$
and therefore the K\"ahler condition, for an arbitrary 2-subgroup $Q$ of $\overline{W}$, can be checked
on the $\Q Q$-module $\Q \mathcal{A}$. Let us set $Q =   \{ \bar{1}, \bar{s} \}$
where $s$ is a reflection in $W$ and $\bar{s}$ its class modulo $Z(W)$. There are
exactly 2 hyperplanes fixed by the action of $\bar{s}$ (the hyperplane fixed by $s$ and its orthogonal), and this is enough to ensure that the K\"ahler condition is satisfied. 
For a concrete description of the group, note that $B$ is an Artin group of Coxeter type $I_2(2m)$, with generators $\sigma, \tau$
and relation $(\sigma \tau)^m = (\tau \sigma)^m$. Our group can be constructed as the inverse image inside the group $B/P_0$ of the subgroup generated by a reflection. Therefore, one gets that
it admits a presentation with generators $\sigma, x_k$ with $k \in \Z/2m\Z$,
and relations $\sigma^2 = x_0$, $\sigma x_k \sigma^{-1} = x_{-k}$.

\bigskip
\bigskip
\bigskip


\begin{thebibliography}{00}
\bibitem{BECK} V. Beck, {\it Abelianization of subgroups of reflection groups and their braid groups: an application to cohomology}, Manuscripta Math. {\bf 136} (2011), 273--293.
\bibitem{BMR} M. Brou\'e, G. Malle, R. Rouquier,
{\it Complex reflection groups, braid groups, Hecke
algebras}, J. Reine Angew. Math. {\bf 500} (1998), 127--190.
\bibitem{DEKIMPE} K. Dekimpe, {\it Almost Bieberbach groups: affine and poynomial structures}, Springer LNM 1639 (1996).
\bibitem{DIGNE} F. Digne, {\it Pr\'esentation des groupes de tresses pures et de certaines de leurs
extensions}, preprint 1999, arXiv:1511.08731v1.
\bibitem{DMM} F. Digne, I. Marin, J. Michel, {\it The center of pure complex braid groups}, J. Algebra {\bf 347} (2011), 206--213.
\bibitem{GGO} D.L. Gon\c calves, J. Guaschi, O. Ocampo, {\it Quotients of the Artin braid
groups and crystallographic groups}, preprint 2015, arXiv:1503.04527v1.
\bibitem{HOFFMAN} M. Hoffman, {\it An invariant of finite abelian groups}, Amer. Math. Monthly {\bf 94} (1987), 664-666.
\bibitem{JOHNSONREES} F.E.A. Johnson, E.G. Rees, {\it K\"ahler groups and rigidity phenomena}, Math. Proc. Camb. Phil. Soc. {\bf 109} (1991), 31-44.
\bibitem{LEHRTAY} G.I. Lehrer, D.E. Taylor, {\it Unitary reflection groups}, Cambridge University Press, 2009.
\bibitem{ARREFL} I. Marin, {\it  Reflection groups acting on their hyperplanes}, J. Algebra {\bf 322} (2009), 2848–2860.
\bibitem{ORLIKTERAO} P. Orlik, H. Terao, {\it Arrangements of hyperplanes},  Springer-Verlag, Berlin, 1992.
\bibitem{RICHARDSON}  R.W. Richardson, {\it Conjugacy classes of involutions in Coxeter groups},
Bull. Austral. Math. Soc. {\bf 26} (1982), 1–15.
\bibitem{SPRINGER} T.A. Springer, {\it Regular elements of finite reflection groups}, Invent. Math. {\bf 25} (1974), 159--198.
\bibitem{TITS} J. Tits, {\it Normalisateurs de tores I : Groupes de Coxeter \'etendus}, J. Algebra {\bf 4} (1966), 96-116.
\end{thebibliography}
\end{document}